\numberwithin{equation}{section}
\definecolor{DarkRed}{rgb}{0.9,0.7,0.3}
\tikzstyle{block} = [draw,rectangle,thick,minimum height=2em,minimum width=2em]
\DeclareMathOperator{\spn}{span}
\newcommand{\T}{\mathcal{T}}
\newcommand{\s}{\mathcal{S}}
\newcommand{\f}{\mathcal{F}}
\newcommand{\Q}{\mathcal{Q}}
\newcommand{\e}{\epsilon}
\renewcommand{\for}{\begin{eqnarray*}}
\newcommand{\mel}{\end{eqnarray*}}
\def\fr{\begin{align*}}
\newcommand{\ten}{\otimes}
\newcommand{\pl}{\hspace{.1cm}}
\newcommand{\ran}{\rangle}
\newcommand{\al}{\alpha}
\newcommand{\si}{\sigma}
\newcommand{\la}{\lambda}
\newcommand{\F}{{\mathbb F}}
\newcommand{\E}{{\mathcal E}}
\newcommand{\Z}{{\mathbb Z}}
\newcommand{\A}{{\mathcal A}}
\newcommand{\B}{{\mathcal{B}}}
\newcommand{\C}{{\mathbb C}}
\newcommand{\norm}[2]{\parallel \! #1 \! \parallel_{#2}}
\newtheorem{lemma}{Lemma}[section]
\newtheorem{prop}[lemma]{Proposition}
\newtheorem{theorem}[lemma]{Theorem}
\newtheorem{cor}[lemma]{Corollary}
\newtheorem{rem}[lemma]{Remark}
\newcommand{\re}{\begin{rem}\rm}
\newcommand{\mar}{\end{rem}}
\newcommand{\bra}[1]{\langle{#1}|}
\newcommand{\ket}[1]{|{#1}\rangle}
\newcommand{\ketbra}[1]{|{#1}\rangle\langle{#1}|}
\newcommand{\prf}{\begin{proof}[\bf Proof:]}
\newcommand{\xspace}{\hbox{\kern-2.5pt}}
\newtheorem{thmx}{Theorem}
\begin{document}
\title{Quantum Teleportation and super-dense coding in operator algebras}
\author{Li Gao}
\address{Department of Mathematics\\
University of Illinois, Urbana, IL 61801, USA} \email[Li Gao]{ligao3@illinois.edu}
\author{Samuel J. Harris}
\address{Department of Pure Mathematics, University of Waterloo, Waterloo, ON, N2L 3G1, Canada} \email[Samuel J. Harris]{sj2harri@uwaterloo.ca}
\author{Marius Junge}
\address{Department of Mathematics\\
University of Illinois, Urbana, IL 61801, USA} \email[Marius Junge]{mjunge@illinois.edu}
\begin{abstract} Let $\B_d$ be the unital $C^*$-algebra generated by the elements $u_{jk}, \,  0 \le i, j \le d-1$, satisfying the relations that $[u_{j,k}]$ is a unitary operator, and let $C^*(\F_{d^2})$ be the full group $C^*$-algebra of free group of $d^2$ generators. Based on the idea of teleportation and super-dense coding in quantum information theory, we exhibit the two $*$-isomorphisms $M_d(C^*(\F_{d^2}))\cong \B_d\rtimes \Z_d\rtimes \Z_d$ and $M_d(\B_d)\cong C^*(\F_{d^2})\rtimes \Z_d\rtimes \Z_d$, for certain actions of $\Z_d$. As an application, we show that for any $d,m\ge 2$ with $(d,m)\neq (2,2)$, the matrix-valued generalization of the (tensor product) quantum correlation set of $d$ inputs and $m$ outputs is not closed.
\end{abstract}
\maketitle

\setlength{\parindent}{4ex}
\section{Introduction}
\emph{Super-dense coding} \cite{superdense} and \emph{teleportation}\cite{teleportation},  devised by Bennett et al., are two fundamental protocols in quantum information theory. These two protocols together describe the fact that, with the assistance of quantum entanglement, quantum communication and classical communication are mutually convertible resources \cite{mother,resource}. Both protocols are examples of the extraordinary power of entanglement, and they demonstrate the fundamental role of non-local correlations in quantum information science. In this paper, we present a reformulation of super-dense coding and teleportation in terms of $C^*$-algebras isomorphisms. As an application, we show that the matrix-valued generalization of (tensor product) quantum correlation set of $d$ inputs and $m$ outputs is not closed.

Recall that Brown's noncommutative unitary $C^*$-algebra $\B_d$, defined in \cite{brown81}, is the universal $C^*$-algebra generated by elements $\{u_{jk}\}_{0\le j,k\le d-1}$ such that the
operator-valued matrix $[u_{jk}]_{jk}$ is a unitary operator. We also recall that the group $C^*$-algebra $C^*(\F_{d^2})$ is the universal $C^*$-algebra generated by $d^2$ unitaries. We show that the protocol maps of super-dense coding and teleportation translate into the following $C^*$-algebra isomorphisms.
\begin{thmx} With certain actions of $\Z_d$,
\begin{align*}M_d(C^*(\F_{d^2}))\cong \B_d\rtimes \Z_d\rtimes \Z_d\pl, M_d(\B_d)\cong C^*(\F_{d^2})\rtimes \Z_d\rtimes \Z_d\pl.
\end{align*} As a consequence, $\B_d$ (resp. $C^*(\F_{d^2})$) is a $C^*$-subalgebra of $M_d(C^*(\F_{d^2}))$ (resp. $M_d(\B_d)$) with a faithful conditional expectation onto it.\label{B}
\end{thmx}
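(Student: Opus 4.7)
The plan is to prove the two isomorphisms by recasting the quantum super-dense coding and teleportation protocols as explicit $*$-homomorphisms between $C^*$-algebras. Fix $\omega = e^{2\pi i/d}$ and let $X, Z \in M_d$ denote the shift and clock unitaries with $XZ = \omega ZX$, so that $W_{a,b} := X^a Z^b$ ($a,b \in \Z_d$) form an orthogonal Weyl basis of $M_d$. The first task is to specify the two $\Z_d$-actions in each iterated crossed product. On $C^*(\F_{d^2})$ with free unitary generators $\{v_{a,b}\}$, the two actions can be taken to permute indices, $\alpha_1(v_{a,b}) = v_{a+1,b}$ and $\alpha_2(v_{a,b}) = v_{a,b+1}$; on $\B_d$, analogous actions can be defined using row/column shifts on $[u_{jk}]$ together with a phase twist. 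Crucially, the second $\Z_d$-action on the first crossed product is twisted on the implementing unitary of the first, e.g.\ $\tilde\alpha_2(\lambda_1) = \omega^{-1}\lambda_1$, so that the two implementing unitaries $\lambda_1, \lambda_2$ satisfy the Weyl relation $\lambda_1 \lambda_2 = \omega \lambda_2 \lambda_1$.

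Under this twist, $\lambda_1, \lambda_2$ generate a unital copy of $M_d$ inside each iterated crossed product. The key structural observation is that for any unital $C^*$-algebra $A$ containing a unital copy of $M_d$, there is a canonical isomorphism $A \cong M_d \otimes (M_d' \cap A) = M_d(M_d' \cap A)$. The proof therefore reduces to identifying the relative commutant of this internal $M_d$ with $\B_d$ in $C^*(\F_{d^2}) \rtimes \Z_d \rtimes \Z_d$, and with $C^*(\F_{d^2})$ in $\B_d \rtimes \Z_d \rtimes \Z_d$. For the first identification, one constructs a $d \times d$ unitary matrix $\widetilde U$ with entries in $M_d' \cap (C^*(\F_{d^2}) \rtimes \Z_d \rtimes \Z_d)$ via a \emph{teleportation formula}: the entries are specific combinations of $v_{a,b}$'s multiplied by Weyl twists $\lambda_1^i \lambda_2^j$, chosen so that conjugation by $\lambda_1, \lambda_2$ acts trivially. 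The universal property of $\B_d$ then yields a $*$-homomorphism $\B_d \to M_d' \cap A$; conversely, the free unitaries $v_{a,b}$ are recovered from the canonical unitary $[u_{jk}] \in M_d(\B_d)$ as its Weyl coefficients (a \emph{super-dense coding formula}). The second isomorphism is obtained by the dual construction, exchanging the roles of $\B_d$ and $C^*(\F_{d^2})$.

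The main obstacle will be choosing the correct phase conventions in the explicit teleportation and super-dense coding formulas so that (i) the matrix $\widetilde U$ is genuinely unitary, (ii) its entries commute with the internal $M_d$, and (iii) the maps constructed in opposite directions are mutually inverse on generators. A direct approach is to analyze the relative commutant explicitly: for $x = \sum a_{ij}\lambda_1^i\lambda_2^j$ in the crossed product, the commutation conditions $[x,\lambda_1] = [x,\lambda_2] = 0$ force $\alpha_1(a_{ij}) = \omega^{-j}a_{ij}$ and $\alpha_2(a_{ij}) = \omega^i a_{ij}$, which picks out a precise $d^2$-graded subspace of $C^*(\F_{d^2})$; one then shows these combinations are spanned by the entries of $\widetilde U$ and that no nontrivial relations hold among those entries beyond the unitarity of the $d \times d$ matrix, using the universal property of the free group $C^*$-algebra for the reverse direction. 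The final consequence --- the faithful conditional expectations onto $\B_d$ and $C^*(\F_{d^2})$ --- then follows by transporting through each isomorphism the standard faithful conditional expectation from an iterated crossed product onto its base algebra.
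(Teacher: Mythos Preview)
Your proposal is correct in outline and shares the paper's conceptual backbone: the teleportation and super-dense coding formulas become explicit $*$-homomorphisms $\B_d \to M_d(C^*(\F_{d^2}))$ and $C^*(\F_{d^2}) \to M_d(\B_d)$, and the twisted implementing unitaries of the iterated $\Z_d$-actions satisfy a Weyl relation and hence generate an internal copy of $M_d$. The technical route you take to the isomorphism is genuinely different, however. You work from the relative-commutant decomposition $A \cong M_d(M_d' \cap A)$ and propose to identify the commutant by analyzing the spectral subspaces of the two $\Z_d$-actions, then building mutually inverse maps on generators via the teleportation and super-dense coding formulas. The paper instead extends its teleportation map $\s_1$ to a $*$-homomorphism $\tilde{\s_1}$ on the full iterated crossed product via covariant representations (sending the implementing unitaries to $X\otimes 1$ and $Z\otimes 1$ in $M_d(C^*(\F_{d^2}))$), checks surjectivity because $\s_1(\B_d)\cup(M_d\otimes 1)$ generates the target, and proves injectivity by composing with $id_{M_d}\otimes\s_2$: after a unitary conjugation this composition is recognized as the canonical quotient from the full to the reduced iterated crossed product, which is an isomorphism because $\Z_d$ is amenable. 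Your approach trades this one-line amenability shortcut for a direct spectral-subspace and inverse-map computation (and your injectivity step, ``no nontrivial relations beyond unitarity,'' would in practice require exactly the kind of composed-map argument the paper carries out); the paper's approach avoids the relative-commutant bookkeeping but leans on the full/reduced identification. The conditional-expectation consequence follows identically in both routes, by transporting the canonical expectation on the crossed product through the isomorphism.
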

\noindent The operator space perspective of super-dense coding and teleportation has been studied in \cite{JP16}. In particular, by \cite[Corollary~1.2 \& Theorem~1.3]{JP16}, the trace class $S_1^d$ and $l_1$-sequence space $l_1^{d^2}$, equipped with their natural operator space structure $S_1^d=(M_d)^*$ and $l_1^{d^2}=(l_\infty^{d^2})^*$, embed into certain matrix levels of each other via complete isometries, i.e.,
\begin{align}S_1^d\hookrightarrow M_d(l_1^{d^2})\pl,\pl l_1^{d^2}\hookrightarrow M_d(S_1^d) \pl.\label{11}\end{align}
Note that $C^*(\F_{d^2})$ (resp. $\B_d$) is the $C^*$-envelope of $l_1^{d^2}$ (resp. $S_1^d$) using suitable unitizations. Theorem \ref{B} can be viewed as liftings of \eqref{11} to $C^*$-algebras. It provides explicit connections between the two universal $C^*$-algebras and relates to some recent results in \cite{harris}. Moreover, the analogous result between ``reduced'' algebras is also obtained.

The second part of this work is devoted to applications in quantum correlations. Quantum correlations are probabilistic correlations that arise from measurement on bipartite quantum systems. Recall that a projection valued measurement (PVM) with $m$ outputs is an
$m$-tuple $(p_a)_{a=1}^m$ of orthogonal projections on a Hilbert space $H$ such that
$\sum_{a}p_a = 1$. A correlation matrix of $d$ inputs and $m$ outputs is a probability density of the form \[\{P(a,b|x,y)\}_{1\le a, b\le m, 1\le x,y\le d }\in \mathbb{R}^{d^2m^2}\pl.\] A correlation matrix $P(a,b|x,y)$ is called \emph{spatial quantum} (or \emph{spatial}) if there are Hilbert spaces $H_A$ and $H_B$,
a unit vector $\ket{\psi} \in H_A \ten H_B $, PVMs $\{ p^x_a \}_{1\le a\le m}$, for $x=1,\cdots, d$ on $H_A$, and PVMs $\{ q^y_b \}_{1\le b\le m}$, for $y=1,\cdots, d$ on $H_B$, such that
\[ P(a,b|x,y)= \bra{\psi}p_a^x\ten q_b^y\ket{\psi}\pl.\]
Let $\Q_s(d,m)$ be the set of all spatial quantum correlations in $d$ inputs and $m$ outputs. Slofstra \cite{slofstra2} proved that there exist $d$ and $m$ with $(d,m) \neq (2,2)$ such that $\Q_s(d,m)$ is not closed. That is, there exists a quantum correlation matrix which is a limit of spatial quantum correlations, but which cannot be observed by tensor product measurements. Slofstra's argument uses certain universal embedding theorems in geometric group theory, and the number $d$ obtained is larger than $100$. More recently, K.J. Dykema, V.I. Paulsen and J. Prakash \cite{Paulsen} proved that $\Q_s(5,2)$ is not closed. It remains open whether, for any ``nontrivial'' size ($(d,m)\neq (2,2), d,m\ge 2$), the spatial correlation set $\Q_s(d,m)$ is not closed. The second main theorem of this paper obtains new the non-closeness results of matrix-valued generalization of quantum correlation sets of size smaller than $(5,2)$.
\begin{thmx}\label{A}
For any $d,m\ge 2$ with $(d,m)\neq (2,2)$, there exists $n \in \mathbb{N}$, with $n \leq 13$, such that the following matrix-valued quantum correlation set
\[\Q_s^n(d,m)=\left\{ \Big[V^*(p_a^x\ten q_b^y)V\Big]_{\overset{x,v}{a,b}}\pl \bigm| \pl \begin{array}{c}
\text{$H_A,H_B$ Hilbert spaces,} \\\text{$V:l_2^n\to H_A\ten H_B$ an isometry,}\\
\text{$(p_a^x)_{a=1}$ PVMs on $H_A$ for each $x=1,\cdots,d$}\\
\text{$(q_b^y)_{b=1}$ PVMs on $H_B$ for each $y=1,\cdots,d$}
\end{array} \right\}  \]
is not closed.
\end{thmx}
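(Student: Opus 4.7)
The plan is to combine Theorem~\ref{B} with the Dykema--Paulsen--Prakash non-closure result for $\Q_s(5,2)$. First I would reformulate matrix-valued correlations in $\Q_s^n(d,m)$ as matrix-valued completely positive maps: writing $\A_{d,m}:=\ast_{i=1}^{d}\cz^{m}$ for the universal $C^*$-algebra generated by $d$ PVMs with $m$ outcomes, each element of $\Q_s^n(d,m)$ is the image of the canonical generators under
\[\Phi:\A_{d,m}\ten_{\min}\A_{d,m}\nach M_n,\quad \Phi(x\ten y)=V^{*}(\pi_A(x)\ten \pi_B(y))V.\]
Non-closure of $\Q_s^n(d,m)$ is then the failure of weak-$*$ approximation of some matricial min-state by such spatial states. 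The base case is the Dykema--Paulsen--Prakash theorem at $n=1$, $(d,m)=(5,2)$, which produces a non-spatial state on $C^{*}(\Z_2^{\ast 5})\ten_{\min}C^{*}(\Z_2^{\ast 5})$ lying in the closure of spatial states.

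The main step is to transport this obstruction to every $(d,m)\neq (2,2)$ at uniformly bounded matrix cost, and here Theorem~\ref{B} is essential. The isomorphism $M_d(C^{*}(\F_{d^{2}}))\cong \B_d\rtimes \Z_d\rtimes \Z_d$ and its dual $M_d(\B_d)\cong C^{*}(\F_{d^{2}})\rtimes \Z_d\rtimes \Z_d$ allow the $d^{2}$ free generators of $C^{*}(\F_{d^{2}})$ to be realized as matrix entries in $\B_d$, and vice versa, at the cost of a matrix amplification by $d$. Combining this with the standard projection-to-unitary encodings that realize $C^{*}(\Z_2^{\ast 5})$ as a quotient or subalgebra of $M_{n_0}(\A_{d,m})$ for appropriate small $n_0$, the Dykema--Paulsen--Prakash witness transports to a matrix-valued state on $\A_{d,m}\ten_{\min}\A_{d,m}$. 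The faithful conditional expectations furnished by Theorem~\ref{B} are then used to show that any spatial realization of the transported state in $\Q_s^n(d,m)$ would pull back to a spatial realization of the original witness at $(5,2)$, contradicting DPP. The explicit bound $n\le 13$ should emerge by optimizing the dimensions of these encodings over the critical small pairs $(d,m)\in\{(2,3),(3,2),(2,4),(4,2),\ldots\}$, with $d^{2}+m^{2}=13$ appearing as the worst case.

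The main obstacle is maintaining the spatial/non-spatial dichotomy under the transport. While realizing the five DPP generators as matrix entries of $\A_{d,m}$ is straightforward via the isomorphisms of Theorem~\ref{B}, one must verify that the conditional expectations associated to these isomorphisms respect the bipartite tensor-product structure on both sides simultaneously, so that a hypothetical spatial presentation at $(d,m)$ really descends to a spatial presentation at $(5,2)$. A secondary difficulty is the small uniform bound $n\le 13$, which I expect to require a short case analysis (rather than a single uniform construction), especially for the extreme cases such as $(2,3)$ and $(3,2)$ where the room for amplification is the tightest.
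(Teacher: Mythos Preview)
Your proposal diverges from the paper's proof in its starting point, in the role it assigns to Theorem~\ref{B}, and in the origin of the bound $n\le 13$; and the transport mechanism you sketch has a genuine gap.

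First, the paper does \emph{not} use the Dykema--Paulsen--Prakash non-closure of $\Q_s(5,2)$ at all. Its seed is the Cleve--Liu--Paulsen non-spatial state on $\B_2\otimes_{\min}\B_2$ coming from embezzlement of entanglement. Theorem~\ref{B} is invoked once, at the very beginning, to turn that state into an explicit non-spatial $M_2$-valued UCP map on $C^*(\F_4)\otimes_{\min}C^*(\F_4)$ (Theorem~\ref{m2} and Lemma~\ref{fact}). From that point on Theorem~\ref{B} plays no further role: the passage to $C^*(*_d\Z_m)$ is achieved by ordinary group embeddings of $\Z_2*\Z_2*\Z$ into $*_4\Z_2$, $*_3\Z_2$, and $\Z_3*\Z_3$, together with a direct, hands-on argument in which one adjoins ``intermediate vectors'' so that the correlation is witnessed on the generators rather than on long words. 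The number $13$ has nothing to do with $d^2+m^2$; it is simply the number of vectors $\ket{\zeta_0},\dots,\ket{\zeta_{12}}$ needed in the $(2,3)$ case to break the relevant words in $\Z_3*\Z_3$ down to length one.

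Second, your transport step has a real obstruction. You need an embedding (with conditional expectation) of $C^*(\Z_2^{*5})$ into $M_{n_0}(\A_{d,m})$ for each small $(d,m)$. Theorem~\ref{B} relates $\B_d$ and $C^*(\F_{d^2})$; it says nothing about $C^*(\Z_2^{*5})$ directly, and there is no group embedding $\Z_2^{*5}\hookrightarrow *_d\Z_m$ to fall back on since the latter groups are torsion when $m<\infty$ only in the cyclic factors, while $\Z_2^{*5}$ has torsion the target need not accommodate (and in any case $\F_k$ is torsion-free, so you cannot route through a free group at the group level). One can cook up $C^*$-algebraic embeddings via matrix tricks, but you have not specified one, and without it there is no way to bound $n$ or to verify the key claim that a spatial presentation downstairs pulls back to a spatial presentation of the DPP witness. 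The paper sidesteps all of this by working with an explicit embezzlement-based correlation whose non-spatiality is proved directly by a Schmidt-coefficient argument, and by choosing group embeddings going the \emph{other} direction (from $\Z_2*\Z_2*\Z$ \emph{into} $*_d\Z_m$), which is unproblematic.
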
\noindent Matrix-valued correlations are the outcomes of partial measurements, and from the $C^*$-algebra perspective, $Q_s^n(d,m)$ is natural generalization of $Q_s(d,m)$ obtained by replacing states by $M_n$-valued unital completely positive (UCP) maps. The study of the sets of matrix-valued quantum correlations led to the equivalence of Connes' embedding problem and a matrix-valued version of Tsirelson's problem (see \cite{J+,Fritz}).  Later, Ozawa \cite{Ozawa13} proved that the scalar version of the Tsirelson problem is equivalent to Connes' embedding problem.

Our construction is closely related to the phenomenon known as \emph{embezzlement of entanglement}, introduced by Van Dam and Hayden \cite{embezzlement}. Cleve, Liu and Paulsen showed in \cite{cleve17} that the protocol for embezzling entanglement corresponds to a state on the minimal tensor product $\B_d\ten_{min} \B_d$ that cannot be implemented as a vector state via tensor product representations. Their result is in parallel to Stolstra's result in the sense that the Brown algebra has analogues of Kirchberg's conjecture \cite{harris} and Tsirelson's problem \cite{harris16}. Our idea is to apply the $*$-isomorphisms in Theorem \ref{B} to translate the non-spatial correlation from $\B_d\ten_{min} \B_d$ to $M_d(C^*(\F_{d^2}))\ten_{min} M_d(C^*(\F_{d^2}))$, and then use group embeddings of free groups into free products of cyclic groups to obtain matrix-valued correlations. Our argument gives explicit non-spatial matrix-valued correlations. In particular, we show that $Q^5_s(3,2)$, $Q^3_s(4,2)$ and $Q^{13}_s(2,3)$ are not closed. Theorem \ref{A} follows easily from the non-closure of $Q^5_s(3,2)$ and $Q^{13}_s(2,3)$.

The main part of this paper is divided into two sections. Section 2 reviews the basics of the protocols of super-dense coding and teleportation, and gives the proof of Theorem A. Based on that section, we show the non-closure of the matrix-valued quantum correlation sets $Q^5_s(3,2)$, $Q^3_s(4,2)$ and $Q^{13}_s(2,3)$ in Section 3.

\section{Teleportation and super-dense coding}
We briefly review the basic protocols of teleportation and super-dense coding and refer to \cite{wilde} for their information-theoretic meaning. Let $M_d$ be the space of $d\times d$ complex matrices, and let $l_2^d$ be the $d$-dimensional complex Hilbert space. We use the bracket notation $\{\ket{j}\}_{0\le j\le d-1}$ for the standard basis of $l_2^d$ and denote by $\{e_{jk}\}_{0 \leq j,k \leq d-1}$ the standard matrix units of $M_d$ given by $e_{jk}=\ket{j}\bra{k}$. The maximally entangled state on $l_2^d\ten l_2^d$ is $\displaystyle\ket{\phi}=\frac{1}{\sqrt{d}}\sum_{0\le j\le d-1}\ket{j} \ket{j}$ and  its density matrix is $\displaystyle\phi=\frac{1}{d}\sum_{0\le j,k\le d-1}e_{jk}\ten e_{jk}$. The generalized Pauli matrices are given by
\[X\ket{j}=e^{\frac{2\pi i j}{d}}\ket{j}\pl, \pl Z\ket{j}=\ket{j+1}\pl, \, \forall\pl 0 \leq j \leq d-1.\]
In the definition of $Z$ and in the remainder of the paper, the addition of indices will be considered modulo $d$.
We introduce the operators $T_{j,k} :=X^{j}Z^{k}$ and vectors $\ket{\phi_{jk}}:=(T_{j,k} \ten 1)\ket{\phi}$. Note that $\{\ket{\phi_{jk}}\}_{0\le j,k\le d-1}$ is a set of maximally entangled vectors, and they form an orthonormal basis for $l_2^d\ten l_2^d$.

Mathematically, the protocol of quantum teleportation for a $d$-dimensional system can be expressed as the follows:
\begin{align}
& M_d  \longrightarrow  M_d\ten M_d\ten M_d  \longrightarrow   l_\infty^{d^2}(M_d) \longrightarrow  M_d \pl, \nonumber
\\ &\rho  \longmapsto \rho \ten \phi  \longmapsto  \frac{1}{d^2}\sum_{0\le j,k\le d-1}\ket{jk}\bra{jk}\ten T_{jk}^*\rho T_{jk} \longmapsto  \rho\pl. \label{tp}
\end{align}
Here $\rho$ can be thought of as the quantum state that the sender Alice send to the receiver Bob. In the protocol, Alice first performs a measurement according to the basis $\{\ket{\phi_{jk}}\}_{j,k}$ on the coupled system of the input $\rho$ and her part of the maximally entangled state $\phi$. She sends the outcome of her measurement, a classical signal of cardinality $d^2$, to Bob via some classical channel. Then Bob reproduces the state $\rho$ by doing a unitary operation on his part according to the information received from Alice. Here a key calculation (see \cite[Lemma 2.1]{JP16}) is that
\[\rho \ten \phi=\frac{1}{d^2}\sum_{0\le j,k,j',k'\le d-1}\ket{\phi_{jk}}\bra{\phi_{j'k'}}\ten T_{jk}^*\rho T_{j'k'}\pl.\]
 The second map of \eqref{tp}, which corresponds to the measurement performed by Alice, is the conditional expectation from $M_d\ten M_d$ onto the commutative subalgebra spanned by $\{\ketbra{\phi_{jk}}\}_{j,k}$. Bob's action is the third map, which is
\begin{align*}\sum_{0\le j,k\le d-1}p_{jk}\ket{jk}\bra{jk}\ten \rho_{jk} \mapsto  \sum_{0 \le j,k\le d-1}p_{jk}T_{jk}^* \rho_{jk}T_{jk}\pl.\end{align*}
Using the same notation, super-dense coding can be expressed as follows:
\begin{align}
& l_\infty^{d^2}  \longrightarrow  l_\infty^{d^2}\ten M_d\ten M_d  \longrightarrow   M_d\ten M_d \longrightarrow  l_\infty^{d^2} \pl, \nonumber
\\ &(p_{jk})  \mapsto \big(\sum_{jk}p_{jk} \ketbra{jk} \big)\ten \phi  \mapsto  \sum_{jk}p_{jk} \ketbra{\phi_{jk}} \mapsto  \sum_{jk}p_{jk} \ketbra{jk}\pl.\label{sd}
\end{align}
This time Alice wants to transmit a classical signal $(p_{jk})$, which is a probability distribution. She first applies the unitary $T_{jk}$ on her part of the maximally entangled state $\phi$ according to the signal $(p_{jk})$, and then sends her part of $\phi$ to Bob via some quantum channel. Now Bob has both parts of the (modified) entangled state, and can perfectly decode the classical signal $(p_{jk})$ via a measurement according to the basis $\{\ket{\phi_{jk}}\}_{j,k}$.

The completely bounded norms of above maps were calculated in \cite{JP16}. We refer to \cite{Ruan,pis-intro} for the basics of operator space theory. Recall that the natural operator space structures of $S_1^d$ and $l_1^d$ are given by the operator space duality $S_1^d=(M_d)^*$ and $l_1^d=(l_\infty^d)^*$, with norms given as follows: for $\sum_{j=0}^{d-1} a_j \otimes e_j \in M_n(\ell_1^d)$ and $\sum_{j,k=0}^{d-1} a_{jk} \otimes e_{jk} \in M_n(S_1^d)$, we have
\begin{align*}
&\left\|\sum_{j=0}^{d-1} a_j\ten e_j \right\|= \sup \left\{\left\|\sum_{j=0}^{d-1} a_j\ten b_j \right\| \pl: b_j \in B(H), \, \|b_j\| \le 1\pl \right\}\pl, \\
&\left\|\sum_{j,k=0}^{d-1} a_{jk}\ten e_{jk}\right\|=\sup \left\{\left\|\sum_{j,k=0}^{d-1} a_{jk}\ten b_{jk}\right\| \pl : b_{jk} \in B(H), \, \left\|\sum_{j,k} e_{jk}\ten b_{jk}\right\| \le 1\pl \right\}\pl,
\end{align*}
where $\{e_j\}_{j=0}^{d-1}$ is the standard basis of $l_1^d$, $\{e_{jk}\}_{j,k=0}^{d-1}$ is the set of standard matrix units for $M_d$, and $a_j,b_j,a_{jk}$ and $b_{jk}$ are $n\times n$ matrices for $0 \leq j,k \leq d-1$.

The following theorem is from \cite[Section 2]{JP16}.
\begin{theorem}\label{os}The following maps are completely contractive:
\begin{align*}
&\mathcal{S}_1: S_1^d \to M_{d}(l_1^{d^2})\pl , \pl \s_1(\rho)=\frac{1}{d}\sum_{j,k=0}^{d-1} T_{jk}^*\rho T_{jk} \ten \ket{jk}\bra{jk} \pl,\\
&\T_1: M_d (l_1^{d^2}) \to S_1^{d}\pl , \pl \T_1(\rho \ten \ketbra{jk})=\frac{1}{d}T_{jk}\rho T_{jk}^* \pl,\\
&\s_2: l_1^{d^2} \to M_d (S_1^{d})\pl , \pl \s_2\left(\sum_{j,k=0}^{d-1} p_{jk}\ketbra{jk}\right)= d\sum_{j,k=0}^{d-1} p_{jk}\ketbra{\phi_{j,d-k}} \pl,\\
&\mathcal{T}_2: M_d (S_1^d) \to l_1^{d^2}\pl , \pl \T_2(\rho)=\frac{1}{d}\sum_{j,k=0}^{d-1} tr(\rho \phi_{j,d-k}) \ketbra{jk}\pl.
\end{align*}
Moreover $\T_1\circ\s_1 =id_{S_1^d}$ and $\T_2\circ\s_2 =id_{l_1^{d^2}}$.  In particular, $\s_1$ and $\s_2$ are complete isometries.
\end{theorem}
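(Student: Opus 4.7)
The proof has three stages. First, verify the retraction identities $\T_1\circ\s_1=id_{S_1^d}$ and $\T_2\circ\s_2=id_{l_1^{d^2}}$ by direct calculation: the first follows from $T_{jk}T_{jk}^*=I$, giving $\T_1(\s_1(\rho))=\sum_{jk}\frac{1}{d^2}T_{jk}T_{jk}^*\rho T_{jk}T_{jk}^*=\rho$; the second uses Bell basis orthogonality $tr(\phi_{jk}\phi_{j'k'})=\delta_{jj'}\delta_{kk'}$, which collapses $\T_2(\s_2(p))$ back to $p$. Once complete contractivity of the four maps is established, these identities force $\s_1,\s_2$ to be complete isometries via $\|X\|=\|(\T_i\circ\s_i)(X)\|\le\|\s_i(X)\|\le\|X\|$ at every matrix amplification.

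Second and most substantial: prove complete contractivity of $\s_1$. Write $\s_1(\rho)=\sum_{jk}K_{jk}\rho K_{jk}^*$ with Kraus operators $K_{jk}=\frac{1}{\sqrt{d}}T_{jk}^*\otimes\ket{jk}$, so $\s_1$ is manifestly completely positive. For $X\in M_n(S_1^d)$, the operator space norm on $M_n(M_d(l_1^{d^2}))$ unfolds as
\[\|(id_n\otimes\s_1)(X)\|_{M_n(M_d(l_1^{d^2}))} = \sup_{\|b_{jk}\|\le 1}\left\|\frac{1}{d}\sum_{jk}(I\otimes T_{jk}^*\otimes I)(X\otimes I)(I\otimes T_{jk}\otimes b_{jk})\right\|.\]
Using the factorization $X=u\lvert X\rvert^{1/2}\cdot\lvert X\rvert^{1/2}$ coming from the polar decomposition $X=u\lvert X\rvert$, split each summand as a row-column product $C_{jk}D_{jk}$ with $C_{jk}=[(I\otimes T_{jk}^*)u\lvert X\rvert^{1/2}]\otimes I$ and $D_{jk}=[\lvert X\rvert^{1/2}(I\otimes T_{jk})]\otimes b_{jk}$, and apply the operator Cauchy-Schwarz $\|\sum C_{jk}D_{jk}\|\le\|\sum C_{jk}C_{jk}^*\|^{1/2}\|\sum D_{jk}^*D_{jk}\|^{1/2}$. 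The twirl identity $\sum_{jk}T_{jk}^*AT_{jk}=d\,tr(A)I_d$, extended to $M_n\otimes M_d$ as $d\,tr_d(\cdot)\otimes I_d$, evaluates $\|\sum C_{jk}C_{jk}^*\|=d\|tr_d\lvert X^*\rvert\|_{M_n}$ exactly, while the bound $b_{jk}^*b_{jk}\le I$ gives $\|\sum D_{jk}^*D_{jk}\|\le d\|tr_d\lvert X\rvert\|_{M_n}$. With the $\frac{1}{d}$ prefactor this produces $\sqrt{\|tr_d\lvert X^*\rvert\|\cdot\|tr_d\lvert X\rvert\|}$, matching $\|X\|_{M_n(S_1^d)}$ via the Haagerup factorization characterization of the cb-norm for maps $M_d\to M_n$.

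The same template handles $\T_1,\s_2,\T_2$: $\T_1$ follows by swapping the row and column roles in the Cauchy-Schwarz split; $\s_2$ and $\T_2$ substitute the Bell basis resolution $\sum_{jk}\ketbra{\phi_{jk}}=I_{d^2}$ for the twirl identity, leveraging the fact that the Bell states form an orthonormal basis of maximally entangled vectors. The principal obstacle is the last identification of the Cauchy-Schwarz bound with $\|X\|_{M_n(S_1^d)}$, which requires the operator space fact that $\|\cdot\|_{cb}$ on $CB(M_d,M_n)=M_n(S_1^d)$ equals the infimum of $\sqrt{\|tr_d(AA^*)\|\cdot\|tr_d(B^*B)\|}$ over factorizations $X=AB$ in $M_n\otimes M_d$, with the polar decomposition realizing this infimum.
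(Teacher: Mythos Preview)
The paper does not actually prove this theorem; it simply attributes the result to \cite[Section 2]{JP16} and states it without argument. So there is no ``paper's own proof'' to compare against --- your proposal supplies what the paper omits.

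Your overall strategy is sound and is essentially the kind of argument one finds in \cite{JP16}: verify the retraction identities by direct computation (using $T_{jk}T_{jk}^*=I$ and the Bell-basis orthogonality $\mathrm{tr}(\phi_{jk}\phi_{j'k'})=\delta_{jj'}\delta_{kk'}$), establish complete contractivity of the four maps via an operator Cauchy--Schwarz inequality together with the twirl identity $\sum_{j,k}T_{jk}^*AT_{jk}=d\,\mathrm{tr}(A)I_d$ (or the Bell resolution $\sum_{j,k}|\phi_{jk}\rangle\langle\phi_{jk}|=I$), and deduce that $\s_1,\s_2$ are complete isometries from $\T_i\circ\s_i=\mathrm{id}$.

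One point deserves care. You write that the Cauchy--Schwarz bound $\sqrt{\|tr_d|X^*|\|\cdot\|tr_d|X|\|}$ matches $\|X\|_{M_n(S_1^d)}$ because ``the polar decomposition realiz[es] this infimum'' in the Haagerup factorization formula $\|X\|_{M_n(S_1^d)}=\inf_{X=AB}\sqrt{\|tr_d(AA^*)\|\cdot\|tr_d(B^*B)\|}$. You do not need the polar decomposition to be optimal, and asserting that it is adds an unnecessary claim to verify. Your row--column splitting $C_{jk}D_{jk}$ and the ensuing Cauchy--Schwarz estimate work verbatim for \emph{any} factorization $X=AB$ (with $A,B$ possibly rectangular), yielding
\[
\|(id_n\otimes\s_1)(X)\|\le \sqrt{\|tr_d(AA^*)\|\cdot\|tr_d(B^*B)\|}
\]
for every such $A,B$. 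Taking the infimum over factorizations and invoking the identification $M_n(S_1^d)\cong M_n(R_d\otimes_h C_d)$ then gives the desired bound $\|(id_n\otimes\s_1)(X)\|\le\|X\|_{M_n(S_1^d)}$ directly, without ever needing to know which factorization is optimal. Phrased this way the argument is clean; as written, the dependence on polar optimality is a small gap.
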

\begin{rem}{\rm The above complete contractions differ from the trace preserving maps in \eqref{tp} and \eqref{sd} by a scaling constant $d$. This difference is because, for each of the maximally entangled vectors $\phi_{jk}$, we have
\[\norm{\phi_{jk}}{S_1^{d^2}}=1 \pl, \pl \norm{\phi_{jk}}{M_d(S_1^d)}=\frac{1}{d}.\]
We also have flipped the indices $(j,k)\to (j,d-k)$ in $\s_2$ and $\T_2$; however, it is clear that our protocol is equivalent to the original protocol.}
\end{rem}

Our candidates for a $C^*$-algebraic analogue of teleportation and super-dense coding are the ``smallest'' $C^*$-algebras containing $S_1^d$ and $l_1^{d^2}$ respectively. Recall that a (concrete) unital operator space $E$ is closed subspace of a $C^*$-algebra containing the identity. The $C^*$-envelope $C_{env}^*(E)$ of a unital operator space $E$ is the unique $C^*$-algebra $C_{env}^*(E)$ equipped with a unital complete isometry $\iota: E\to C_{env}^*(E)$ satisfying the following property: for any unital complete isometry $j:E\to B(H)$, there exists a unique surjective $*$-homomorphism $\pi:C^*(j(E)) \to C_{env}^*(E)$ such that $\pi\circ j=\iota$, where $C^*(j(E))$ is the $C^*$-subalgebra of $B(H)$ generated by the image $j(E)$.

Recall that the (full) group $C^*$-algebra $C^*(\F_d)$ is the universal $C^*$-algebra generated by $d$ unitaries. The noncommutative unitary $C^*$-algebra $\B_d$ is defined to be the universal $C^*$-algebra generated by $\{u_{jk}\}_{0\le j,k\le d-1}$ such that $U:=\sum_{0\le j,k\le d-1}e_{jk}\ten u_{jk}$ is  unitary in $M_d(\B_d)$.

\begin{prop}\label{env} Let $\{g_j\}_{j=0}^{d-1}$ be the generators of $C^*(\F_d)$, and let $\{u_{jk}\}_{j,k=0}^{d-1}$ be the generators of $\B_d$. Define the operator spaces and unitalization \begin{align*}&\mathcal{X}_d:= \spn ( \{g_j\}_{j=0}^{d-1})\subseteq C^*(\F_d)\pl,\pl \tilde{\mathcal{X}}_d=\spn ( 1\cup\mathcal{X}_d)\subseteq C^*(\F_d)\pl; \\ &\mathcal{Y}_d:= \spn (  \{u_{jk}\}_{j,k=0}^{d-1})\subseteq \B_d \pl, \pl \tilde{\mathcal{Y}}_d=\spn ( 1\cup\mathcal{Y}_d)\subseteq \B_d\pl.\end{align*}
Then:
\begin{enumerate}
\item[i)]$\mathcal{X}_d \cong l_1^d$ completely isometrically and $C^*_{env}(\tilde{\mathcal{X}}_d)\cong C^*(\F_d)$.
\item[ii)]$\mathcal{Y}_d\cong S_1^d$ completely isometrically and $C^*_{env}(\tilde{\mathcal{Y}}_d)\cong \B_d$.
\end{enumerate}
\end{prop}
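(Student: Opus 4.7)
The plan is to prove the two halves of each statement separately: first the complete isometric operator-space identifications, then the $C^*$-envelope identifications.

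For $\mathcal{X}_d\cong l_1^d$ and $\mathcal{Y}_d\cong S_1^d$, I will define the linear bijections $e_\alpha\mapsto g_\alpha$ and $e_{jk}\mapsto u_{jk}$ on generators and verify they are complete isometries by unwinding both sides. The universal property of $C^*(\F_d)$ (respectively $\B_d$) gives the norm of $\sum_\alpha a_\alpha\otimes g_\alpha$ in $M_n(C^*(\F_d))$ (respectively of $\sum_{j,k} a_{jk}\otimes u_{jk}$ in $M_n(\B_d)$) as the supremum of $\|\sum_\alpha a_\alpha\otimes v_\alpha\|$ over $d$-tuples of unitaries in $B(H)$ (respectively $\|\sum_{j,k} a_{jk}\otimes v_{jk}\|$ over $V=[v_{jk}]\in M_d(B(H))$ unitary). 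The formulas recalled above Theorem~\ref{os} express the corresponding $M_n(l_1^d)$ and $M_n(S_1^d)$ norms as the same suprema but over $d$-tuples of contractions (respectively over contractive $d\times d$ block matrices). These suprema agree by Halmos/Sz.-Nagy unitary dilation: any contraction (respectively contractive block matrix) is the $(1,1)$-corner of a unitary (respectively unitary block matrix) on a doubled Hilbert space, compression is norm-decreasing, and unitaries are automatically contractions.

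For the $C^*$-envelope identifications I will use Hamana's characterization via the triviality of the Shilov boundary ideal. Once the operator-space identifications are in hand, the natural inclusions $\iota\colon\tilde{\mathcal{X}}_d\hookrightarrow C^*(\F_d)$ and $\iota\colon\tilde{\mathcal{Y}}_d\hookrightarrow\B_d$ are unital complete isometries, so $C^*(\F_d)$ and $\B_d$ are $C^*$-covers of the respective unital operator spaces, and their envelopes are $*$-quotients of them. It remains to show that the Shilov boundary ideals are trivial. Suppose $J\trianglelefteq C^*(\F_d)$ is an ideal whose quotient $q\colon C^*(\F_d)\to C^*(\F_d)/J$ is still completely isometric on $\tilde{\mathcal{X}}_d$. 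Since $q$ is a $*$-homomorphism, each $q(g_\alpha)$ is a unitary, and the linear span of $q(1)$ with the $q(g_\alpha)$ realizes the full $M_n(l_1^d)$ norm on every matrix coefficient; this maximal operator-space structure is only attained by a tuple of unitaries that is \emph{as free as the generators of $\F_d$}, forcing $q$ to be injective and $J=0$. The Brown-algebra case is parallel, with unitary block matrices in $M_d(B(H))$ replacing scalar unitary tuples and the universal property of $\B_d$ playing the role of that of $C^*(\F_d)$.

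The main obstacle is this last implication: turning the qualitative statement ``the maximum $l_1^d$ norm is attained only by sufficiently free unitaries'' into a rigorous argument. I plan to use the residual finite-dimensionality of $C^*(\F_d)$ to produce, for any nonzero $x\in J$, a finite-dimensional unitary representation $\pi$ of $\F_d$ with $\pi(x)\ne 0$; such a $\pi$ cannot factor through the quotient. The dilation argument of the first half can then be reversed to construct a specific matrix coefficient $(a_0,a_\alpha)$ at which $\|a_0\otimes 1+\sum_\alpha a_\alpha\otimes \pi(g_\alpha)\|$ exceeds the supremum of $\|a_0\otimes 1+\sum_\alpha a_\alpha\otimes \sigma(g_\alpha)\|$ over representations $\sigma$ factoring through $C^*(\F_d)/J$, yielding the required strict inequality. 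For $\B_d$ the same blueprint applies with matrix-valued test coefficients built from the block structure of Brown's universal unitary; if that direct argument becomes involved, I would appeal to the related results in~\cite{harris}.
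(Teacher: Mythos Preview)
Your argument for the complete isometries $\mathcal{X}_d\cong l_1^d$ and $\mathcal{Y}_d\cong S_1^d$ is correct and essentially identical to the paper's: both reduce to showing that the supremum over contractions equals the supremum over unitaries, which follows from Halmos dilation.

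Your proposed route to the $C^*$-envelope identifications, however, has a genuine gap. You assume $J\neq 0$ and that the quotient $q\colon C^*(\F_d)\to C^*(\F_d)/J$ is completely isometric on $\tilde{\mathcal{X}}_d$, and you want a contradiction. Your plan is to use residual finite-dimensionality to find a finite-dimensional representation $\pi$ with $\pi(x)\neq 0$ for some $x\in J$, and then exhibit a matrix coefficient $(a_0,a_\alpha)$ at which $\|a_0\otimes 1+\sum_\alpha a_\alpha\otimes\pi(g_\alpha)\|$ strictly exceeds $\sup_\sigma\|a_0\otimes 1+\sum_\alpha a_\alpha\otimes\sigma(g_\alpha)\|$, the supremum taken over representations $\sigma$ factoring through $C^*(\F_d)/J$. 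But this is impossible: that supremum is exactly the norm in $M_n(C^*(\F_d)/J)$, which by your own hypothesis equals the full norm in $M_n(C^*(\F_d))$; and since $\pi$ is a representation of $C^*(\F_d)$, your left-hand side is bounded above by that same full norm. No strict inequality can ever appear. The fact that $\pi$ fails to factor through $J$ gives you no leverage on the linear span $\tilde{\mathcal{X}}_d$, because the complete-isometry hypothesis says precisely that $J$ is invisible at the level of first-order matrix coefficients. The slogan ``the maximal $l_1^d$ structure is only attained by unitaries as free as the generators'' is exactly the statement you are trying to prove, so invoking it is circular.

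The paper avoids this by a direct argument that does not touch Shilov ideals. The canonical surjection $\Psi\colon C^*(\F_d)\to C^*_{env}(\tilde{\mathcal{X}}_d)$ is a $*$-homomorphism, so each image $\Psi(g_j)$ is unitary in the envelope. One then extends the inverse assignment $\Psi(g_j)\mapsto g_j$ to a UCP map $\Phi\colon C^*_{env}(\tilde{\mathcal{X}}_d)\to B(H)$ via Wittstock, and takes a minimal Stinespring dilation $\Phi=V^*\pi(\cdot)V$. The key elementary observation is that $\pi(\Psi(g_j))$ is a unitary whose $(1,1)$ corner $g_j$ is itself unitary, which forces the off-diagonal blocks to vanish; hence $\Phi$ is multiplicative on the generating set and therefore a $*$-homomorphism inverse to $\Psi$. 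The same ``unitary with unitary corner'' trick, applied to the block unitary $[u_{jk}]$ in $M_d(\B_d)$, handles part ii).
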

\begin{proof} The complete isometry in i) can also be found in \cite[Theorem 8.12]{pis-intro}; we include the proof for completeness. Let $H$ be an infinite dimensional Hilbert space and $a_j,a_{jk}$ be matrices in $M_n$ for $0 \leq j,k \leq d-1$. For an element $\sum_{j=0}^{d-1} a_j \otimes e_j \in M_n(l_1^d)$, we have
\begin{align*}
\left\|\sum_{j=0}^{d-1} a_j\ten e_j\right\|&=\sup \left\{\left\|\sum_{j=0}^{d-1} a_j\ten b_j\right\| \pl: b_j \in B(H), \, \|b_j\| \leq 1\right\} \\
&=\sup \left\{ \left\|\sum_{j=0}^{d-1} a_j\ten b_j \right\| \pl: b_j \text{ unitary in } B(H) \right\} \\
&=\left\| \sum_{j=0}^{d-1} a_j\ten g_j \right\|_{M_n(C^*(\F_d))}.
\end{align*}
Similarly, if $\sum_{j,k=0}^{d-1} a_{jk} \otimes e_{jk}$ is an element of $M_n(S_1^d)$, then
\begin{align*}
\left\| \sum_{j,k=0}^{d-1} a_{jk}\ten e_{jk} \right\|
&=\sup \left\{ \left\|\sum_{j,k=0}^{d-1} a_{jk}\ten b_{jk} \right\|: b_{jk} \in B(H), \, \left\| \sum_{j,k=0}^{d-1} b_{jk} \otimes e_{jk} \right\|_{M_n(B(H))} \leq 1\right\} \\
&=\sup \left\{ \left\|\sum_{j,k=0}^{d-1} a_{jk}\ten b_{jk} \right\|: \sum_{j,k=0}^{d-1} e_{jk}\ten b_{jk} \pl\text{unitary in}\pl M_d(B(H)) \right\} \\
&=\left\|\sum_{j,k=0}^{d-1} a_{jk}\ten b_{jk}\right\|_{M_n(\B_d)}. \\
\end{align*}
Thus, the maps
\begin{gather*}j_1:l_1^d \to \mathcal{X}_d\subset C^*(\F_d)\pl,
\pl\pl\pl  j_1(e_j)=g_j \pl, \\
 j_2:S_1^d \to \mathcal{Y}_d\subset\B_d\pl, \pl\pl \pl   j_2(e_{jk})=u_{jk} \pl,\end{gather*}
are complete isometries.

We now show that $C_{env}^*(\tilde{\mathcal{X}}_d)\cong C^*(\mathbb{F}_d)$. Since there is a unital completely isometric inclusion $\tilde{\mathcal{X}}_d \subseteq C^*(\mathbb{F}_d)$, by definition of the $C^*$-envelope, there is a surjective $*$-homomorphism $\Psi: C^*(\mathbb{F}_d) \to C_{env}^*(\tilde{\mathcal{X}}_d)$ such that $\Psi(g_j)=e_j$ for all $0 \leq j \leq d-1$.  Since each $g_j$ is unitary in $C^*(\mathbb{F}_d)$, each $e_j$ is unitary in $C_{env}^*(\tilde{\mathcal{X}}_d)$.  On the other hand, assume that $C^*(\F_d)\subseteq B(H)$ is a faithful representation, for some Hilbert space $H$. By Wittstock's extension theorem \cite{wittstock}, the unital complete isometry $\Phi:\tilde{\mathcal{X}}_d \to C^*(\F_d)\subseteq B(H)$ given by $\Phi(e_j)=g_j$ extends to a unital completely contractive (hence completely positive) map from $C^*_{env}(\tilde{\mathcal{X}}_d)$ to $B(H)$, which we will also denote by $\Phi$. Choose a minimal Stinespring dilation $\Phi(\cdot)=V\pi(\cdot)V^*$ for $\Phi$ on some Hilbert space $H_{\pi}$.  We may write $H_{\pi}=ran(V)\oplus ran(V)^{\perp}$. With respect to this decomposition,
\begin{align*}
\pi(e_j)=\left[\begin{array}{cc} \Phi(e_j) &*\\ *&*
\end{array}\right]=\left[\begin{array}{cc} g_j & * \\ * & * \end{array}\right]\pl.
\end{align*}
Since $e_j$ is unitary in $C^*_{env}(\tilde{\mathcal{X}}_d)$, $\pi(e_j)$ must be unitary in $B(H_{\pi})$.  But the $(1,1)$ entry of $\pi(e_j)$ is unitary as well, so the $(1,2)$ and $(2,1)$ entries must be $0$.  Therefore,
\begin{align*}
\pi(e_j)=\left[\begin{array}{cc} g_j &0\\0&*
\end{array}\right]=\left[\begin{array}{cc} \Phi(e_j) & 0 \\ 0 & * \end{array}\right]\pl.
\end{align*}
Thus, $\Phi$ is multiplicative on the generating set $\{e_j\}_{j=0}^{d-1}$ for $C^*_{env}(\tilde{\mathcal{X}}_d)$, so $\Phi$ must be a $*$-homomorphism. Moreover, $\Phi$ is surjective onto $C^*(\F_d)$ because the set $\{g_j\}_{0 \leq j \leq d-1}$ generates $C^*(\F_d)$.  Then $\Phi \circ \Psi(g_j)=g_j$ and $\Psi \circ \Phi(e_j)=e_j$ for all $j$.  It follows that $\Phi \circ \Psi=\text{id}_{C^*(\F_d)}$ and $\Psi \circ \Phi=\text{id}_{C^*_{env}(\tilde{\mathcal{X}}_d)}$. Hence, $C^*_{env}(\tilde{\mathcal{X}}_d)$ is isomorphic to $C^*(\F_d)$. The proof that $C_{env}^*(\tilde{\mathcal{Y}}_d)=\B_d$ is similar (see \cite[Theorem 4.3]{harris}).
\end{proof}

The next lemma shows that the embeddings from Theorem \ref{os} can be extended to $*$-homomorphisms on the respective $C^*$-envelopes. We will denote these $*$-homomorphisms by $\s_1$ and $\s_2$, respectively.

\begin{lemma}\label{oa} Let $\displaystyle\{g_{lm}\}_{0\le l,m\le d-1}$ be the generators of $C^*(\F_{d^2})$, and let $\displaystyle\{u_{jk}\}_{0\le j,k\le d-1}$ be the generators of $\B_d$. Define $\s_1:\B_d \to M_d(C^*(\F_{d^2}))$ and $\s_2: C^*(\F_{d^2}) \to M_d(\B_d)$ by
\begin{align*}
&\s_1(u_{jk})=\frac{1}{d}\sum_{l,m=0}^{d-1} e^{-\frac{2\pi i (j-k)l}{d}}e_{j-m,k-m} \ten g_{lm} \pl, \\ &\s_2(g_{lm})=\sum_{j,k=0}^{d-1} e^{\frac{2\pi i (j-k)l}{d}}e_{j-m,k-m}\ten u_{jk} \pl.
\end{align*}
\end{lemma}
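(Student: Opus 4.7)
By universality of $\B_d$ (as the universal $C^*$-algebra with generating $d\times d$ unitary matrix $[u_{jk}]$) and of $C^*(\F_{d^2})$ (as the universal $C^*$-algebra on $d^2$ unitaries), the lemma reduces to two unitarity checks: \emph{(a)} each $\sigma_2(g_{lm})$ is a unitary in $M_d(\B_d)$, and \emph{(b)} the block matrix $U_1 := \sum_{j,k} e_{jk}\otimes \sigma_1(u_{jk})$ is a unitary in $M_d(M_d(C^*(\F_{d^2})))$. Both checks rest on the elementary Weyl identities
\[
X^l\, e_{j,k}\, X^{-l} = e^{2\pi i(j-k)l/d}\,e_{j,k}, \qquad Z^{-m}\, e_{j,k}\, Z^m = e_{j-m,k-m}.
\]

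For (a) I would shift the summation index by $m$ to rewrite $\sigma_2(g_{lm}) = \sum_{j',k'} e^{2\pi i(j'-k')l/d} e_{j',k'}\otimes u_{j'+m,k'+m}$, then absorb the phase as conjugation by $X^l\otimes 1$ and absorb the shift inside $u_{j'+m,k'+m}$ as conjugation by $Z^{-m}\otimes 1$ applied to the defining unitary $U = \sum_{j,k} e_{jk}\otimes u_{jk}\in M_d(\B_d)$. This yields the clean expression
\[
\sigma_2(g_{lm}) = (X^l Z^{-m}\otimes 1)\,U\,(Z^m X^{-l}\otimes 1),
\]
a product of unitaries in $M_d(\B_d)$.

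For (b) I would collect the coefficients of each $g_{lm}$ in $U_1$ to obtain $U_1 = \sum_{l,m} B_{lm}\otimes g_{lm}$ with $B_{lm} = \tfrac{1}{d}\sum_{j,k}e^{-2\pi i(j-k)l/d}\,e_{jk}\otimes e_{j-m,k-m}$. Applying the two Weyl identities, $B_{lm} = (X^{-l}\otimes Z^{-m})\,\phi\,(X^l\otimes Z^m)$; on the other hand, a direct expansion of $(T_{-l,m}\otimes 1)\ket{\phi}$ from the definitions yields the vector $\tfrac{1}{\sqrt d}\sum_q e^{-2\pi i ql/d}\ket{q,\,q-m}$, whose rank-one density matrix matches $B_{lm}$ term by term. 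This gives the key identification
\[
B_{lm} = \ketbra{\phi_{-l,m}}.
\]
Since $\{\ketbra{\phi_{jk}}\}_{j,k}$ are mutually orthogonal rank-one projections summing to $1$ on $l_2^d\otimes l_2^d$ and each $g_{lm}$ is unitary, orthogonality immediately gives $U_1^*U_1 = U_1 U_1^* = \sum_{l,m}\ketbra{\phi_{-l,m}}\otimes 1 = 1$.

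The main technical step I expect is the identification $B_{lm}=\ketbra{\phi_{-l,m}}$: it is the algebraic fingerprint of the teleportation protocol \eqref{tp}, translating the Weyl action $X^{-l}\otimes Z^{-m}$ on $\phi$ into the Bell basis label $(-l,m)$. Once it is in hand, unitarity of $U_1$ drops out of orthogonality of Bell projections, unitarity of $\sigma_2(g_{lm})$ is immediate from the conjugation formula, and the universal properties of $\B_d$ and $C^*(\F_{d^2})$ yield both $*$-homomorphism extensions.
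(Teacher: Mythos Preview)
Your proof is correct and follows essentially the same approach as the paper: for $\s_2$ you obtain the conjugation formula $\s_2(g_{lm})=(X^lZ^{-m}\otimes 1)\,U\,(Z^mX^{-l}\otimes 1)$, which is exactly the paper's $(T_{l,-m}\otimes 1)U(T_{l,-m}\otimes 1)^*$, and for $\s_1$ you identify the coefficient of $g_{lm}$ in $\sum_{j,k}e_{jk}\otimes\s_1(u_{jk})$ as $\ketbra{\phi_{-l,m}}$, precisely as the paper does. Your write-up is somewhat more explicit about the Weyl identities and the orthogonality argument, but the substance is identical.
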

\begin{proof} Let $\displaystyle U=\sum_{j,k=0}^{d-1} u_{jk}\ten e_{jk} \in M_d(\B_d)$ be the fundamental unitary of $\B_d$. Note that 
\begin{align*}
\sum_{j,k}e_{jk}\ten \s_1(u_{jk})=\frac{1}{d}\sum_{j,k,l,m}e_{jk}\ten  e^{-\frac{2\pi i (j-k)l}{d}}e_{j-m,k-m} \ten g_{lm} =\sum_{l,m}g_{lm}\ten \ketbra{\phi_{-l,m}}
\end{align*}
 is a unitary in $M_d\ten M_d(C^*(\F_{d^2}))$. For the second map $\s_2$, 
 \begin{align*}
\s_2(g_{lm})=\sum_{j,k}e^{\frac{2\pi i (j-k)l}{d}}e_{j-m,k-m}\ten u_{jk} =(T_{l,-m}\ten 1)U(T_{l,-m}\ten 1)^*\pl
\end{align*}
is a unitary in $M_d(\B_d)$ for each $0 \leq l,m \leq d-1$. By the universal property of $C^*(\F_{d^2})$, $\s_2$ is a $*$-homomorphism.
\end{proof}

Moving towards a proof of Theorem \ref{B}, we consider two automorphisms $\alpha_1,\alpha_2$ of $\B_d$ given as follows:
\[\al_1(u_{jk})=e^{\frac{2\pi i (j-k)}{d}}u_{jk}\pl, \pl \al_2(u_{jk})=u_{j+m,k+m}, \, \forall \pl 0 \leq j,k \leq d-1,\]
where, in the definition of $\al_2$, the addition of indices is done modulo $d$. Both $\al_1$ and $\al_2$ have order $d$; that is, $\al_1^d=\al_2^d=\text{id}_{\B_d}$ and $\al_i^k \neq \text{id}_{\B_d}$ for all $1 \leq k \leq d-1$ and $i=1,2$. They give two actions of
the cyclic group $\Z_d$ on $\B_d$. We define the iterated crossed product $\B_d\rtimes_{\al_1} \Z_d \rtimes_{\al_2}\Z_d$ with $\al_2$ acting on the first $\Z_d$ via character action. Namely, $\B_d\rtimes_{\al_1} \Z_d \rtimes_{\al_2}\Z_d$ is the universal $C^*$-algebra generated by the algebra of all sums of the form
\[F=\sum_{l,m=0}^{d-1}A_lv^lw^m\pl, \pl A_l\in \B_d,\]
where the product and adjoint are extended from $\B_d$ to satisfy, for each $A \in \B_d$,
\begin{align}\label{rule} &vAv^{-1}=\al_1(A)\pl , \pl v^*=v^{-1}=v^{d-1}\pl,\pl vw=e^{\frac{2\pi i }{d}}wv\pl, \nonumber \\ &wAw^{-1}=\al_2(A)\pl, \pl w^*=w^{-1}=w^{d-1}\pl.\end{align}
The iterated reduced crossed product $\B_d\rtimes_{\al_1,r} \Z_d \rtimes_{\al_2,r} \Z_d$ is isomorphic to the $C^*$-subalgebra of $M_d\ten M_d\ten\B_d$ generated by the range of the map $\pi:\B_d \to M_d\ten M_d\ten \B_d$ given by
\[\pi(A)=\sum_{l,m=1}^{d-1} e_{ll} \ten e_{mm}\ten\al_{1}^{-l}\al_{2}^{-m}(A) \pl, \]
and by the unitaries $v=Z\ten X\ten 1$ and $w=1\ten Z\ten 1$, where $X$ and $Z$ are the generalized Pauli matrices in $M_d$. Because $\Z_d$ is amenable, the full crossed product $\B_d\rtimes_{\al_1} \Z_d \rtimes_{\al_2} \Z_d$ isomorphic to the reduced crossed product $\B_d\rtimes_{\al_1,r} \Z_d \rtimes_{\al_2,r} \Z_d$ via the canonical quotient map \cite[Theorem 4.2.6]{brown08}.

We also define two automorphisms $\beta_1,\beta_2$ on $C^*(\F_{d^2})$ by
\[\beta_1(g_{jk})=g_{j+1,k}\pl, \pl \beta_2(g_{jk})=g_{j,k-1}\pl, \pl 0\le j,k\le d-1 \pl.\]
The iterated crossed product $C^*(\F_{d^2})\rtimes_{\beta_1}\Z_d\rtimes_{\beta_2}\Z_d$ is defined in a similar manner to \eqref{rule}. We are now in a position to prove Theorem \ref{B}.

\begin{theorem}\label{main} Let $\al_1,\al_2,\beta_1,\beta_2$ be the automorphisms given above. Then
 \[\B_d\rtimes_{\al_1} \Z_d \rtimes_{\al_2} \Z_d \cong M_d(C^*(\F_{d^2}))\pl, \text{ and } \pl C^*(\F_{d^2})\rtimes_{\beta_1} \Z_d \rtimes_{\beta_2} \Z_d \cong M_d(\B_d)\pl.\]
\end{theorem}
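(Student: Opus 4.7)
My plan is to prove the first isomorphism $\B_d \rtimes_{\al_1} \Z_d \rtimes_{\al_2} \Z_d \cong M_d(C^*(\F_{d^2}))$ by constructing a $*$-homomorphism in each direction and verifying that they are mutually inverse on generators; the second isomorphism will follow by the entirely parallel argument, with $\s_2, \beta_1, \beta_2$ replacing $\s_1, \al_1, \al_2$.

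For the forward direction, I would extend the $*$-homomorphism $\s_1$ from Lemma \ref{oa} by sending the unitaries $v, w$ implementing $\al_1, \al_2$ in the crossed product to the generalized Pauli matrices $X\otimes 1, Z\otimes 1 \in M_d(C^*(\F_{d^2}))$. The relations $X^d = Z^d = 1$ and $XZ = e^{2\pi i/d}ZX$ match the defining relations on $v, w$ (including the Weyl commutation $vw = e^{2\pi i/d}wv$). The required covariances $(X\otimes 1)\s_1(u_{jk})(X\otimes 1)^* = \s_1(\al_1(u_{jk}))$ and $(Z\otimes 1)\s_1(u_{jk})(Z\otimes 1)^* = \s_1(\al_2(u_{jk}))$ follow from the identities $X e_{rs} X^* = e^{2\pi i(r-s)/d} e_{rs}$ and $Z e_{rs} Z^* = e_{r+1, s+1}$ applied to the explicit formula for $\s_1(u_{jk})$. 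The universal property of the iterated crossed product then produces a unital $*$-homomorphism $\Phi_1$.

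For the reverse direction, I would define inside the crossed product the ``Weyl matrix units''
\[ E_{ab} := \frac{1}{d} \sum_{c=0}^{d-1} e^{-2\pi i ca/d}\, v^c w^{a-b} \]
(indices modulo $d$), which satisfy $\Phi_1(E_{ab}) = e_{ab}\otimes 1$; and, mimicking the super-dense coding formula, set
\[ G_{lm} := \sum_{j,k=0}^{d-1} e^{2\pi i (j-k)l/d}\, E_{k-m,\,j-m}\, u_{jk}. \]
A discrete Fourier orthogonality calculation (using $\sum_j e^{2\pi i j(l-l')/d} = d\,\delta_{l,l'}$) collapses $\Phi_1(G_{lm})$ to $1\otimes g_{lm}$, so $\Phi_1$ is surjective. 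To upgrade the assignment $e_{ab}\otimes 1 \mapsto E_{ab}$, $1\otimes g_{lm} \mapsto G_{lm}$ to a $*$-homomorphism $\Psi_1: M_d(C^*(\F_{d^2})) \to \B_d\rtimes\Z_d\rtimes\Z_d$, I would verify (i) the $E_{ab}$ satisfy the matrix-unit relations (immediate from the Weyl relations for $v, w$), (ii) each $G_{lm}$ is unitary, and (iii) each $E_{ab}$ commutes with each $G_{lm}$; given these, the tensor product presentation $M_d(C^*(\F_{d^2})) = M_d \otimes C^*(\F_{d^2})$ yields $\Psi_1$. The identities $\Psi_1\circ\Phi_1 = \mathrm{id}$ (checked on the generators $u_{jk}, v, w$ via another Fourier orthogonality computation analogous to the one for $\Phi_1(G_{lm})$) and $\Phi_1\circ\Psi_1 = \mathrm{id}$ (which is immediate by construction) finish the argument.

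The main obstacle is the verification of (ii) and (iii): both amount to careful bookkeeping with the commutation rules $v u_{jk} v^{-1} = e^{2\pi i(j-k)/d} u_{jk}$ and $w u_{jk} w^{-1} = u_{j+1, k+1}$, together with the identities $\sum_j u_{jk}^* u_{jk'} = \delta_{k,k'}$ and $\sum_k u_{jk} u_{j'k}^* = \delta_{j, j'}$ coming from the unitarity of the fundamental matrix $U \in M_d(\B_d)$. For the second isomorphism $C^*(\F_{d^2})\rtimes_{\beta_1}\Z_d\rtimes_{\beta_2}\Z_d \cong M_d(\B_d)$, the identical scheme applies: the forward map extends $\s_2$ by sending the implementing unitaries of $\beta_1, \beta_2$ again to $X\otimes 1, Z\otimes 1 \in M_d(\B_d)$, one verifies analogous covariances, and the reverse map uses Weyl matrix units in the new crossed product together with the mirrored formula $\tilde U_{jk} := \frac{1}{d}\sum_{l,m} e^{-2\pi i(j-k)l/d}\, E_{k-m,\,j-m}\, g_{lm}$ for the preimage of $1\otimes u_{jk}$.
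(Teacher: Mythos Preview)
Your forward map $\Phi_1$ coincides with the paper's $\tilde{\s_1}$: both arise from the covariant pair $(\s_1, X\otimes 1, Z\otimes 1)$. The approaches diverge on injectivity. You build an explicit inverse $\Psi_1$ by producing Weyl matrix units $E_{ab}$ from $v,w$ and preimages $G_{lm}$ of $1\otimes g_{lm}$, then verifying the relations (i)--(iii); this is correct, and the verifications you flag as the ``main obstacle'' do go through by straightforward (if slightly tedious) use of the commutation rules together with $U^*U=UU^*=1$. The paper instead avoids these computations entirely: it composes $\tilde{\s_1}$ with $id_{M_d}\otimes \s_2$, computes $(id_{M_d}\otimes \s_2)\circ\s_1(u_{jk})=\sum_{l,n}\ketbra{\phi_{-l,-n}}\otimes\al_1^l\al_2^n(u_{jk})$, and observes that after conjugating by a single unitary $V\otimes 1$ this is exactly the canonical map from the full crossed product to the reduced one, which is a $*$-isomorphism by amenability of $\Z_d$.

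What each buys: your route is more elementary and self-contained --- it never invokes amenability or the reduced crossed product, and it makes the inverse completely explicit. The paper's route is shorter and more conceptual, and it highlights the teleportation/super-dense-coding duality that is the paper's theme: the injectivity of the ``teleportation'' map $\s_1$ is witnessed precisely by the ``super-dense-coding'' map $\s_2$. Both arguments transfer symmetrically to the second isomorphism.
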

\begin{proof}Let $\s_1:\B_d \to M_d(C^*(\F_{d^2}))$ and $\s_2:C^*(\F_{d^2})  \to M_d(\B_d)$ be the $*$-homomorphisms from Lemma \ref{oa}. Then $(\s_1,X\ten 1)$ is a covariant representation of the $C^*$-dynamical system $(\B_d,\al_1,\Z_d)$.  By the universal property of $\B_d \rtimes_{\al_1} \Z_d$, this covariant representation induces a canonical $*$-homomorphism $\s_1': \B_d\rtimes_{\al_1} \Z_d\to M_d(C^*(\F_{d^2}))$ such that, for $A \in \B_d$ and the generator $v$ of $\Z_d$,
\[\s_1'(A)=\s_1(A) \pl, \pl \s_1'(v)=X \otimes 1 \pl.\]
Moreover, $(\s_1', Z\ten 1)$ is a covariant representation of $(\B_d\rtimes_{\al_1} \Z_d, \al_2, \Z_d)$, so it induces a canonical $*$-homomorphism $\tilde{\s_1} : \B_d\rtimes_{\al_1} \Z_d\rtimes_{\al_2} \Z_d\to M_d(C^*(\F_{d^2}))$ such that,
for $A\in \B_d$ and the generators $v,w$ of the two copies of $\Z_d$,
\[\tilde{\s_1}(A)=\s_1(A)\pl , \pl \tilde{\s_1}(v)=X\ten 1 \pl , \pl \tilde{\s_1}(w)=Z\ten 1\pl.\]
One can see that $\tilde{\s_1}$ is surjective since $\s_1(\B_d) \cup (M_d \ten \C 1)$ generates $M_d(C^*(\F_{d^2}))$. Now, consider the $*$-homomorphism $id_{M_d}\ten\s_2: M_d(C^*(\F_{d^2}))\to M_d\ten M_d\ten\B_d$. The range of $id_{M_d}\ten\s_2$ is generated by $(id_{M_d}\ten\s_2)\circ \s_1 (\B_d)$ and $M_d\ten \C 1\ten \C 1$. Note that for $0 \leq j,k \leq d-1$,
\begin{align*}
(id_{M_d}\ten\s_2)\circ \s_1(u_{jk})&=\frac{1}{d}\sum_{l,m}e^{-\frac{2\pi i (j-k)l}{d}}e_{j-m,k-m} \ten \s_2(g_{lm})
\\&=\frac{1}{d}\sum_{l,m,a,b}e^{-\frac{2\pi i (j-k)l}{d}}e_{j-m,k-m} \ten e^{\frac{2\pi i (a-b)l}{d}}e_{a-m,b-m}\ten u_{ab}
\\&=\sum_{l,m,n}e_{j-m,k-m} \ten e_{j+n-m,k+n-m}\ten u_{j+n,k+n}
\\&=\frac{1}{d}\sum_{m,n}e_{j-m,k-m} \ten e_{j+n-m,k+n-m}\ten e^{-\frac{2\pi i (j-k)l}{d}}\al_{1}^{l}\al_{2}^{n}(u_{jk})
\\&=\sum_{l,n}\ketbra{\phi_{-l,-n}}\ten \al_{1}^{l}\al_{2}^{n}(u_{jk})\pl.
\end{align*}
 Let $V\in M_d\ten M_d$ be the unitary given by $V(\ket{jk})=e^{-\frac{2\pi i jk}{d}}\ket{\phi_{jk}}$ for each $0 \leq j,k \leq d-1$. Then
\begin{align}&(V^*\ten 1)(id_{M_d}\ten\s_2)(A)(V\ten 1)= \sum_{0\le j, k\le d-1}  e_{jj}\ten e_{kk} \ten\al_{1}^{-j}\al_{2}^{-k}(A)\pl , \nonumber\\ &V^*(X\ten 1 )V=Z\ten X\pl , \pl V^*(Z\ten 1)V=1\ten Z. \pl \label{ue}\end{align}
In particular, $(V^* \otimes 1)(M_d \ten \s_2(C^*(\F_{d^2})))(V \otimes 1)=\B_d \rtimes_{\alpha_1,r} \Z_d \rtimes_{\alpha_2,r} \Z_d$. It follows that the map $(V^* \otimes 1)[(id_{M_d}\ten\s_2)\circ\tilde{\s_1}(\cdot)] (V \otimes 1)$ is the canonical quotient map from the full crossed product to the reduced crossed product, and must be a $*$-isomorphism. Therefore, $\tilde{\s_1}$ is injective, so that $\tilde{\s_1}$ is also a $*$-isomorphism.

The argument for the second isomorphism is similar. Using covariant representations, we obtain the surjective $*$-homomorphism \begin{align*}&\tilde{\s_2}: C^*(\F_{d^2})\rtimes_{\beta_1}\Z_d\rtimes_{\beta_2}\Z_d \to M_d(\B_d)\pl,\\
&\tilde{\s_2}(B)=\s_2(B)\pl , \pl \tilde{\s_2}(v)=X\ten 1 \pl , \pl \tilde{\s_2}(w)=Z\ten 1\pl.
\end{align*} Note that for each $0 \leq l,m \leq d-1$,
\begin{align*}
(id_{M_d}\ten\s_1)\circ \s_2(g_{lm})&=\frac{1}{d}\sum_{j,k}e^{\frac{2\pi i (j-k)l}{d}}e_{j-m,k-m} \ten \s_1(u_{jk})
\\&=\frac{1}{d}\sum_{j,k,a,b}e^{\frac{2\pi i (j-k)l}{d}}e_{j-m,k-m} \ten e^{-\frac{2\pi i (j-k)a}{d}}e_{j-b,k-b}\ten g_{ab}
\\&=\frac{1}{d}\sum_{j,k,a,b}\ketbra{\phi_{l-a,-m+b}}\ten \beta_1^{a-l}\beta_2^{m-b}(g_{lm})\pl.
\\&=\frac{1}{d}\sum_{a,b}\ketbra{\phi_{a,b}}\ten \beta_1^{-a}\beta_2^{-b}(g_{lm})\pl.
\end{align*}
Conjugating by the same unitary $V\ten 1$ as in \eqref{ue}, we obtain the canonical quotient map from the full crossed product to the reduced crossed product, which is a $*$-isomorphism.  Hence, $\tilde{\s}_2$ is a $*$-isomorphism.
\end{proof}

\begin{cor}
There exist unital completely positive maps $\T_1: M_d(C^*(\F_{d^2}))\to \B_d$ and $\T_2: M_d(\B_d)\to C^*(\F_{d^2})$ such that $\T_1\circ \s_1= id_{\B_d}$ and $\T_2\circ \s_2= id_{C^*(\F_{d^2})}$. As a consequence, $\s_1$ and $\s_2$ are injective $*$-homomorphisms.
\end{cor}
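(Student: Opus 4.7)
The plan is to leverage the $*$-isomorphisms of Theorem \ref{main}. Observe that the map $\s_1 : \B_d \to M_d(C^*(\F_{d^2}))$ factors as
\[\s_1 = \tilde{\s_1} \circ \iota,\]
where $\iota$ is the canonical inclusion $\B_d \hookrightarrow \B_d \rtimes_{\al_1} \Z_d \rtimes_{\al_2} \Z_d$ and $\tilde{\s_1}$ is the $*$-isomorphism produced in the proof of Theorem \ref{main}. Hence producing a UCP left inverse $\T_1$ for $\s_1$ reduces to producing a UCP conditional expectation $E_1$ from the iterated crossed product onto $\B_d$, and then setting $\T_1 := E_1 \circ \tilde{\s_1}^{-1}$.

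The core step is thus the construction of $E_1$, which is standard for crossed products by the finite (hence amenable) group $\Z_d \times \Z_d$. The dual action of $\widehat{\Z_d \times \Z_d}$ on $\B_d \rtimes \Z_d \rtimes \Z_d$ fixes $\B_d$ pointwise and scales $v, w$ by characters; averaging over this dual action yields the canonical UCP conditional expectation
\[E_1\Big(\sum_{l,m=0}^{d-1} A_{lm}\, v^l w^m\Big) = A_{00}.\]
Equivalently, in the concrete picture of the reduced crossed product inside $M_d \ten M_d \ten \B_d$ recorded before Theorem \ref{main}, $E_1$ corresponds to the slice map $\omega_0 \ten \omega_0 \ten \mathrm{id}_{\B_d}$, where $\omega_0(a) = \bra{0}a\ket{0}$; one checks directly that this annihilates both $v = Z \ten X \ten 1$ and $w = 1 \ten Z \ten 1$ and restricts to the identity on the image of $\iota$.

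With $E_1$ available, $\T_1 := E_1 \circ \tilde{\s_1}^{-1}$ is UCP as a composition of UCP maps, and
\[\T_1 \circ \s_1 = E_1 \circ \tilde{\s_1}^{-1} \circ \tilde{\s_1} \circ \iota = E_1 \circ \iota = \mathrm{id}_{\B_d}.\]
The map $\T_2$ is built in exactly the same way from the second isomorphism $C^*(\F_{d^2}) \rtimes_{\beta_1} \Z_d \rtimes_{\beta_2} \Z_d \cong M_d(\B_d)$ and the analogous conditional expectation onto $C^*(\F_{d^2})$. The consequence that $\s_1$ and $\s_2$ are injective $*$-homomorphisms is then immediate: they are $*$-homomorphisms by Lemma \ref{oa}, and any map with a left inverse is injective. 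I do not anticipate a genuine obstacle here; all the nontrivial structural content sits in Theorem \ref{main}, and the existence of a conditional expectation onto the base algebra of a crossed product by an amenable group is a standard construction.
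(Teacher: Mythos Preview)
Your proposal is correct and follows essentially the same route as the paper: define $\T_i$ as the composition of $\tilde{\s_i}^{-1}$ with the canonical conditional expectation from the iterated crossed product onto the base algebra, exactly as the paper does. One minor imprecision: the iterated crossed product is not literally a crossed product by $\Z_d \times \Z_d$ (the relation $vw = e^{2\pi i/d} wv$ twists the two copies), but your averaging argument over the two dual $\Z_d$-actions still produces the expectation $\sum_{l,m} A_{lm} v^l w^m \mapsto A_{00}$, so this does not affect the argument.
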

\begin{proof}
There are natural conditional expectations $\E_1:\B_d\rtimes_{\al_1} \Z_d \rtimes_{\al_2} \Z_d\to \B_d$ and ${\E_2: C^*(\F_{d^2})\rtimes_{\beta_1} \Z_d \rtimes_{\beta_2} \Z_d \to C^*(\F_{d^2})}$ given by
\[\E_1 \left(\sum_{l,m=0}^{d-1} A_{lm}v^lw^m \right)=A_{00}\pl \text{ and } \pl \E_2 \left(\sum_{l,m=0}^{d-1} B_{lm}v^lw^m \right)=B_{00}\pl.\]
Define \[\T_1=\E_1\circ\tilde{\s_1}^{-1}\pl \text{ and }\pl \T_2=\E_2\circ\tilde{\s_2}^{-1}\pl,\]
where $\tilde{\s_1}$ and $\tilde{\s_2}$ are the isomorphisms from Theorem \ref{main}. It readily follows that $\T_1 \circ \s_1=\text{id}_{\B_d}$ and $T_2 \circ \s_2=\text{id}_{C^*(\F_{d^2})}$.
\end{proof}

Isomorphisms analogous to those in Theorem \ref{main} can be obtained for the reduced $C^*$-algebras associated to $C^*(\F_{d^2})$ and $\B_d$. Let $C(\mathbb{T})$ be the $C^*$-algebra of continuous functions on the unit circle. McClanahan \cite{McClanahan} proved that $M_d(\B_d)$ is $*$-isomorphic to the free product $M_d*_\C C(\mathbb{T})$ by the map
\[u_{jk}\to \sum_{0\le l\le d-1}e_{lj}ue_{kl}\pl, \pl e_{jk}\to e_{jk}\pl, \]
where $u$ is the unitary $u(z)=z$ for all $z\in \mathbb{T}$ which generates $C(\mathbb{T})$ (we refer to \cite{freeprob} for information about free products and reduced free products of $C^*$-algebras). Moreover, $\B_d$ is isomorphic to the relative commutant $M_d^c$ in $M_d * C(\mathbb{T})$.  The reduced Brown algebra $\B_d^{red}$ is defined as the relative commutant $M_d^c$ in the reduced free product $M_d*_\C^{red}C(\mathbb{T})$, where the reduced free product is taken with respect to the unique trace $tr$ on $M_d$ and the canonical trace $\tau$ on $C(\mathbb{T})$ given by
\[tr([a_{jk}])=\frac{1}{d}\sum_{j}a_{jj}\pl \text{ and } \tau(f)=\frac{1}{2\pi}\int^{2\pi}_{0} f(e^{it})dt \pl.\]
We recall that the reduced group $C^*$-algebra of $C^*_{red}(\F_{d})$ is the $C^*$-algebra generated by the range of the left-regular representation
\[ \la: \F_{d}\to B(l_2(\F_{d})) \pl , \text{ where }\pl \la(g)\ket{h}=\ket{gh}\pl,\pl \forall\pl g,h\in\F_{d}.\]

Our argument for the reduced case relies on the following lemma.
\begin{lemma}Let $(\A,\phi)$ and $(\B,\psi)$ be two unital $C^*$-algebras equipped with states $\phi$ and $\psi$, respectively. Denote by $\pi_\phi$ (resp. $\pi_\psi$) the GNS-representation of $\phi$ (resp. $\psi$). Suppose that $\al:\A \to \B$ is a $*$-isomorphism satisfying $\phi=\psi\circ\al$. Then there exists a $*$-isomorphism $\al_\pi:\pi_\phi(\A)\to \pi_\psi(\B)$ such that $\al_\pi \circ\pi_\phi=\pi_\psi\circ\al$.\label{induced}
\end{lemma}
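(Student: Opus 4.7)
The proof plan is essentially the standard functoriality of the GNS construction, using the intertwining state condition $\phi=\psi\circ\alpha$ to produce a unitary between GNS spaces that conjugates one representation into the other.

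First I would invoke the GNS construction on both sides: let $(H_\phi,\pi_\phi,\xi_\phi)$ and $(H_\psi,\pi_\psi,\xi_\psi)$ be the GNS triples for $(\A,\phi)$ and $(\B,\psi)$ respectively, so that $\pi_\phi(\A)\xi_\phi$ is dense in $H_\phi$ and $\pi_\psi(\B)\xi_\psi$ is dense in $H_\psi$. The key move is to define a map
\[
U_0 : \pi_\phi(\A)\xi_\phi \to \pi_\psi(\B)\xi_\psi, \qquad U_0(\pi_\phi(a)\xi_\phi) = \pi_\psi(\alpha(a))\xi_\psi.
\]
Well-definedness and isometry are simultaneous: for any $a\in\A$,
\[
\|\pi_\phi(a)\xi_\phi\|^2 = \phi(a^*a) = \psi(\alpha(a^*a)) = \psi(\alpha(a)^*\alpha(a)) = \|\pi_\psi(\alpha(a))\xi_\psi\|^2,
\]
using that $\alpha$ is a $*$-homomorphism and $\phi=\psi\circ\alpha$. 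Since $\alpha$ is surjective, the range of $U_0$ is dense in $H_\psi$, and hence $U_0$ extends by continuity to a unitary $U:H_\phi\to H_\psi$.

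Next I would define $\alpha_\pi:\pi_\phi(\A)\to\pi_\psi(\B)$ by $\alpha_\pi(T)=UTU^*$. A direct computation on cyclic vectors yields, for $a,b\in\A$,
\[
U\pi_\phi(a)\pi_\phi(b)\xi_\phi = U\pi_\phi(ab)\xi_\phi = \pi_\psi(\alpha(ab))\xi_\psi = \pi_\psi(\alpha(a))U\pi_\phi(b)\xi_\phi,
\]
so by density $U\pi_\phi(a) = \pi_\psi(\alpha(a))U$, that is, $\alpha_\pi(\pi_\phi(a))=\pi_\psi(\alpha(a))$. This gives the desired intertwining identity $\alpha_\pi\circ\pi_\phi = \pi_\psi\circ\alpha$. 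Since conjugation by $U$ is a $*$-isomorphism from $B(H_\phi)$ to $B(H_\psi)$, its restriction to $\pi_\phi(\A)$ is a $*$-homomorphism; the intertwining identity shows the image is precisely $\pi_\psi(\B)$ (using surjectivity of $\alpha$), and injectivity is automatic from unitary conjugation.

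There is really no significant obstacle here; the only subtlety is to remember to check that $U$ is not just an isometry but a \emph{unitary}, which requires surjectivity of $\alpha$ so that $\pi_\psi(\alpha(\A))\xi_\psi = \pi_\psi(\B)\xi_\psi$ is dense in $H_\psi$. Everything else is a routine consequence of the universal property of the GNS construction.
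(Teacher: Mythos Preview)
Your proof is correct and follows essentially the same approach as the paper: define a unitary between the GNS Hilbert spaces via $\pi_\phi(a)\xi_\phi\mapsto\pi_\psi(\alpha(a))\xi_\psi$ using the state-preserving property, and obtain $\alpha_\pi$ as conjugation by this unitary. If anything, you are slightly more careful than the paper in explicitly verifying that $U$ is surjective (hence unitary) and in checking the intertwining identity on cyclic vectors.
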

\begin{proof} Let $L_2(\A,\phi)$, $L_2(\B,\psi)$ be the Hilbert spaces of the GNS construction for $\phi$ and $\psi$, respectively. For any $a_1,a_2\in \A$, we have
\[\psi(\al(a_1)^*\al(a_2))=\psi(\al(a_1^*a_2))=\phi(a_1^*a_2)\pl.\]
Thus, $\al$ induces a unitary operator $V_\al: L_2(\A,\phi)\to L_2(\B,\psi)$. Consider
\[\al_\pi(\cdot)=V_{\al} (\cdot) V_\al^*\]
as a $*$-isomorphism from $B(L_2(\A,\phi))$ onto $B(L_2(\B,\psi))$. Note that for every $a \in \A$,
\begin{align*}& \al_\pi(\pi_\phi(a))=V_\al \pi_\phi(a) V_\al^*= \pi_\psi(\al(a))\pl.  \qedhere \end{align*}
Since $\alpha:\A \to \B$ is a $*$-isomorphism, $\al_{\pi}:\pi_{\phi}(\A) \to \pi_{\psi}(\B)$ must be a $*$-isomorphism, completing the proof.
\end{proof}

Note that the reduced free product $M_d*_\C^{red}C(\mathbb{T})$ is isomorphic to the range of the GNS representation of the full free product $M_d*_\C C(\mathbb{T})$ with respect to free product trace $tr*\tau$. Similarly, the reduced group $C^*$-algebra $C^*_{red}(\F_{d^2})$ is isomorphic to the range of the GNS representation of $C^*(\F_{d^2})$ with respect to the canonical trace $\omega:C^*(\F_{d^2})$ given on finite sums by \[\omega\left(\sum_{\gamma \in \F_{d^2}} a_{\gamma} \gamma \right)=a_1.\]
By Lemma \ref{induced}, the actions $\al_1,\al_2$ on $\B_d$ and $\beta_1,\beta_2$ on $C^*(\F_{d^2})$ induce reduced versions of the actions because they preserve the traces involved. For simplicity, the reduced versions of the actions on $\B_d$ and $C^*(\F_{d^2})$ will also be denoted by $\al_1,\al_2,\beta_1$ and $\beta_2$.

\begin{cor}\label{red}Let $\al_1$ and $\al_2$ the the actions on $\B_d^{red}$ induced by the actions on $\B_d$, and let $\beta_1,\beta_2$ be the actions on $C^*_{red}(\F_{d^2})$ induced by the actions on $C^*(\F_{d^2})$. Then \[\B^{red}_d\rtimes_{\al_1}\Z_d\rtimes_{\al_2}\Z_d\cong M_d(C^*_{red}(\F_{d^2}))\pl \text{ and } \pl C^*_{red}(\F_{d^2}) \rtimes_{\beta_1}\Z_d\rtimes_{\beta_2}\Z_d\cong M_d(\B^{red}_d)  \pl
\pl.\]
\end{cor}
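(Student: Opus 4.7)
The plan is to descend the $*$-isomorphisms $\tilde{\s}_1,\tilde{\s}_2$ of Theorem~\ref{main} from the full to the reduced level by applying Lemma~\ref{induced}. Write $\phi_B$ for the restriction of $tr*\tau$ to $\B_d$ (viewed as $M_d^c \subset M_d *_\C C(\mathbb{T})$), and $\omega$ for the canonical trace on $C^*(\F_{d^2})$ that picks out the coefficient of the neutral element. I first verify that $\al_1$ and $\al_2$ preserve $\phi_B$: each lifts to a $tr*\tau$-preserving automorphism of $M_d *_\C C(\mathbb{T})$ via the universal property of the free product---for $\al_1$, by sending $u\mapsto YuY^*$ where $Y=\mathrm{diag}(1,e^{2\pi i/d},\ldots,e^{2\pi i(d-1)/d})$ and fixing $M_d$; for $\al_2$, by conjugating $M_d$ by the cyclic permutation matrix while fixing $u$. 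Both $\beta_1,\beta_2$ clearly preserve $\omega$ since they permute the generators of $\F_{d^2}$. By Lemma~\ref{induced}, all four actions extend to the reduced algebras $\B_d^{red}$ and $C^*_{red}(\F_{d^2})$, so the crossed products in the statement are well-defined, and each full iterated crossed product carries a canonical tracial state $\psi_B := \phi_B\circ \E_1$ on $\B_d\rtimes\Z_d\rtimes\Z_d$ and $\psi_F := \omega\circ \E_2$ on $C^*(\F_{d^2})\rtimes\Z_d\rtimes\Z_d$.

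The main computation will be the trace-intertwining identity $\psi_B = (tr\ten \omega)\circ \tilde{\s}_1$. Since $\tilde{\s}_1(Av^pw^q) = \s_1(A)(X^pZ^q\ten 1)$ and $tr(X^pZ^q)=\delta_{p0}\delta_{q0}$, the identity reduces to showing $\phi_B = (tr\ten\omega)\circ \s_1$ on $\B_d$. I plan to verify this on every monomial $u_{j_1k_1}^{\eps_1}\cdots u_{j_nk_n}^{\eps_n}$: the explicit formula for $\s_1$ from Lemma~\ref{oa} expresses the left-hand side as a sum of matrix units tensored with products of $g_{lm}^{\pm 1}$, and the vanishing of $\omega$ on any non-identity product of free-group generators forces a specific matching pattern on the indices; on the other side, the freeness of $M_d$ and $C(\mathbb{T})$ in $M_d*_\C C(\mathbb{T})$ produces the same matching patterns via the free moment formulas, with coefficients that agree. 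As a representative check, both sides return $\delta_{ja}\delta_{kb}/d$ on $u_{jk}^*u_{ab}$. The analogous identity $\psi_F = (tr\ten\phi_B)\circ \tilde{\s}_2$ will be established in the same fashion.

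Given the trace-intertwining property, Lemma~\ref{induced} applied to $\tilde{\s}_1$ will produce a $*$-isomorphism
\[\pi_{\psi_B}(\B_d\rtimes\Z_d\rtimes\Z_d) \;\cong\; \pi_{tr\ten\omega}(M_d(C^*(\F_{d^2}))) \;=\; M_d(C^*_{red}(\F_{d^2})),\]
the last equality because GNS respects spatial tensor products of traces (using that $M_d$ is finite-dimensional and has a unique trace). It remains to identify $\pi_{\psi_B}(\B_d\rtimes\Z_d\rtimes\Z_d)$ with $\B_d^{red}\rtimes_{\al_1}\Z_d\rtimes_{\al_2}\Z_d$. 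I will argue that both algebras act faithfully on the Hilbert space $L_2(\B_d,\phi_B)\ten \ell_2(\Z_d\times\Z_d)$: the former as the GNS representation of the full crossed product with respect to $\psi_B$; the latter as the canonical covariant representation, which is faithful because $\Z_d$ is amenable and the action is trace-preserving. The two representations agree on the generating set $\{u_{jk}, v, w\}$ and hence give the same $C^*$-subalgebra. The second isomorphism of the corollary follows by the same argument applied to $\tilde{\s}_2$.

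The hard part will be the second paragraph: verifying $\phi_B=(tr\ten\omega)\circ\s_1$ on all of $\B_d$. Since $\B_d$ need not have a unique tracial state, the identity cannot be deduced abstractly from matching on the generators $u_{jk}$, and a genuine moment computation in both $M_d *_\C C(\mathbb{T})$ and the group algebra $C^*(\F_{d^2})$ will be required. Once it is established, the remaining steps are routine applications of Lemma~\ref{induced} together with standard facts about crossed products by amenable groups with trace-preserving actions.
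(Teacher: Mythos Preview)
Your overall strategy---descend $\tilde{\s}_1,\tilde{\s}_2$ to the reduced level by verifying trace intertwining and then invoking Lemma~\ref{induced}---is exactly what the paper does, and your treatment of the crossed-product identification via amenability of $\Z_d$ is correct. The one substantive tactical difference concerns the first isomorphism. You propose to verify $\phi_B=(tr\ten\omega)\circ\s_1$ directly on monomials $u_{j_1k_1}^{\eps_1}\cdots u_{j_nk_n}^{\eps_n}$ in $\B_d$, matching the combinatorics of free-group cancellations against free moments in $M_d*_\C C(\mathbb{T})$. This should work, but as you correctly anticipate it is the laborious step. The paper avoids it by a tensoring trick: rather than working in $\B_d$, it passes to $M_d(\B_d)\rtimes_{\al_1}\Z_d\rtimes_{\al_2}\Z_d\cong M_d*_\C C(\mathbb{T})\rtimes\Z_d\rtimes\Z_d$ and checks that $id_{M_d}\ten\tilde{\s}_1$ intertwines $\widehat{tr*\tau}$ with $tr\ten tr\ten\omega$. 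The advantage is that $M_d(\B_d)$ has the transparent spanning set $U_{j_1k_1}^{\eps_1}\cdots U_{j_nk_n}^{\eps_n}(T_{a,b}\ten 1)$ of alternating words, on which the free-product trace vanishes automatically (except on the identity); the image under $id_{M_d}\ten\tilde{\s}_1$ is computed in closed form from the formula $(id_{M_d}\ten\s_1)(U_{jk})=\sum_{l,m}\ketbra{\phi_{l,m}}\ten g_{j-l,m-k}$, and one reads off that the image also has trace zero because the resulting word in the $g_{\cdot,\cdot}$'s is reduced. One then recovers the first isomorphism by taking relative commutants of $M_d\ten\C 1$. So both arguments are correct; the paper's buys a cleaner moment computation at the cost of an extra $M_d$-tensoring and a relative-commutant step, while yours stays at the $\B_d$ level throughout. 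Note also that the paper establishes the second isomorphism first (the easier of the two, since the trace on $C^*(\F_{d^2})$ is explicit on reduced words), whereas you begin with the harder one.
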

\begin{proof}We begin by proving the second isomorphism. Because $\beta_1$ and $\beta_2$ preserve the trace $\omega$, the (reduced) crossed product $C^*_{red}(\F_{d^2}) \rtimes_{\al_1}\Z_d\rtimes_{\al_2}\Z_d$ is the GNS representation of $C^*(\F_{d^2}) \rtimes_{\al_1}\Z_d\rtimes_{\al_2}\Z_d$ with respect to the natural extension of $\omega$ to $C^*(\F_{d^2}) \rtimes_{\beta_1} \Z_d \rtimes_{\beta_2} \Z_d$ given by
\[\widehat{\omega}\left(\sum_{l,m=0}^{d-1} A_{lm}v^lw^m \right)=\omega(A_{00})\pl.\]
Thus, it is sufficient to show that the isomorphism $\tilde{\s_2}$ from Theorem \ref{main} is trace-preserving.  That is to say, it is sufficient to show that ${(tr*\tau)\circ\tilde{\s_2}=\widehat{\omega}}$. Let $g=g_{j_1,k_1}^{\e_1}g_{j_2,k_2}^{\e_2}\cdots g_{j_n,k_n}^{\e_n}$ be a reduced word in $\F_{d^2}$, where $(j_a,k_a) \neq (j_b,k_b)$ for $a \neq b$, and $\e_1,\cdots,\e_n\in \Z \setminus \{0\}$. Recall that $T_{j,k}=X^jZ^k$, where $X$ and $Z$ are the generalized Pauli matrices in $M_d$. Let $\displaystyle U=\sum_{j,k}e_{jk}\ten u_{jk}$ be the fundamental unitary of $M_d(\B_d)$, and let
\[U_{j,k}:=(T_{j,k}\ten 1)^*U(T_{j,k}\ten 1)=\tilde{\s_2}(g_{j,k})\pl.\]
We observe that for $0 \leq a_1,a_2,b_1,b_2 \leq d-1$, we have \[T_{a_1,b_1}T_{a_2,b_2}^*=X^{a_1}Z^{b_1-b_2}X^{-a_2}=\exp \left(\frac{2\pi i a_2(b_1-b_2)}{d}\right) T_{a_1-a_2,b_1-b_2}.\] Then for $0 \leq j_{n+1},k_{n+1} \leq d$,
\begin{align*}\tilde{\s_2}(gv^{j_{n+1}}w^{k_{n+1}})&= U_{j_1,k_1}^{\e_1}U_{j_2,k_2}^{\e_2}\cdots U_{j_n,k_n}^{\e_n}(T_{j_{n+1},{k_{n+1}}}\ten 1)
\\
&=(T_{j_1,k_1} \otimes 1)^* U (T_{j_1,k_1} \otimes 1) \cdots (T_{j_n,k_n} \otimes 1)^* U (T_{j_n,k_n} \otimes 1) (T_{j_{n+1},k_{n+1}} \otimes 1) \\
&=\lambda \cdot (T_{j_1,k_1}\ten 1)^* U^{\e_1} (T_{j_2-j_1,k_2-k_1}\ten 1)^* \cdots U^{\e_n}(T_{j_{n+1}+j_n,k_{n+1}+k_n}\ten 1) \pl, \\
\end{align*}
where \[\lambda=\exp\left(\frac{2\pi ij_{n+1}(k_n+k_{n+1})\sum_{l=1}^{n-1}j_{l+1}(k_l-k_{l+1})}{d}\right).\]
Thus, by definition of $tr*\tau$, $\widetilde{\s}_2(gv^{j_{n+1}}w^{k_{n+1}})$ has trace zero. When $g=1$,
\[ \tilde{\s_2}(v^{j}w^{k})=T_{j,k}\ten 1,\]
and this element has trace $1$ if $(j,k)=(0,0)$ and trace $0$ otherwise. It follows that $(tr*\tau)\circ\tilde{\s_2}=\omega$, so that the second isomorphism follows from Lemma \ref{induced}. For the first isomorphism, we start with a matrix version of the full algebras:
\[ M_d\ten M_d(C^*(\F_{d^2}))\cong M_d(\B_d)\rtimes_{\al_1}\Z_d\rtimes_{\al_2}\Z_d \cong M_d*_\C C(\mathbb{T})\rtimes_{\al_1}\Z_d\rtimes_{\al_2}\Z_d \pl.\]
We want to show that the extension of free product trace $\widehat{tr*\tau}$ on $M_d*_{\C} C(\mathbb{T}) \rtimes_{\alpha_1} \Z_d \rtimes_{\alpha_2} \Z_d$ coincides with the product trace $tr\ten tr\ten \omega$ on $M_d \otimes M_d(C^*(\F_{d^2}))$.
Now, $M_d(\B_d)\rtimes_{\al_1}\Z_d\rtimes_{\al_2}\Z_d$ is spanned by elements of the form
\begin{align}\label{ele}U_{j_1k_1}^{\e_1}U_{j_2k_2}^{\e_2}\cdots U_{j_nk_n}^{\e_n}(X^{j_{n+1}}Z^{k_{n+1}}\ten 1)v^{j_{n+2}}w^{k_{n+2}}\pl, \end{align}
for $(j_1,k_1)\neq (j_2,k_2)\neq \cdots \neq (j_n,k_n)$ and nonzero $\e_1,\cdots,\e_n\in \Z$. The trace $\widehat{tr*\tau}$ of \eqref{ele} is $1$ if the word equals the identity (which happens when $n=j_{n+1}=j_{n+2}=k_{n+1}=k_{n+2}=0$), and $0$ otherwise. On the other hand, for each $0 \leq j,k \leq d-1$,
\begin{align*}id_{M_d}\ten\tilde{\s_1}(U_{jk})&=\frac{1}{d}\sum_{a,b,l,m}e^{\frac{2\pi i(a-b)j}{d}}e_{a+k,b+k}\ten e^{-\frac{2\pi i(a-b)l}{d}}e_{a-m,b-m}\ten g_{lm}\\&= \sum_{l,m}\ket{\phi_{j-l,m+k}}\bra{\phi_{j-l,m+k}}\ten g_{lm}
\\&= \sum_{l,m}\ket{\phi_{l,m}}\bra{\phi_{l,m}}\ten g_{j-l,m-k}\pl.\end{align*}
Thus, the isomorphism $id_{M_d}\ten\tilde{\s_1}$ sends  an element of the form \eqref{ele} to
\begin{align*}&\sum_{l,m}e^{2\pi i{j_{n+1}k_{n+2}}}\Big(\ket{\phi_{l,m}}\bra{\phi_{l,m}}(T_{j_{n+1},k_{n+1}}\ten T_{j_{n+2},k_{n+2}})\Big)\ten g_{j_1-l,m-k_1}^{\e_1}\cdots g_{j_n-l,m-k_n}^{\e_n}\\=&
\sum_{l,m}e^{2\pi i{j_{n+1}k_{n+2}}}\ket{\phi_{l,m}}\bra{\phi_{l-j_{n+1}-j_{n+2},m-k_{n+1}+k_{n+2}}}\ten g_{j_1-l,m-k_1}^{\e_1}\cdots g_{j_n-l,m-k_n}^{\e_n}\pl.\end{align*}
An element of this form is always of $0$ trace by $tr\ten tr \ten \omega$, because the word \\${g_{j_1-l,m-k_1}^{\e_1}\cdots g_{j_n-l,m-k_n}^{\e_n}}$ is reduced and non-trivial. When $n=0$, \[id_{M_d}\ten\tilde{\s_1}((T_{j_{1},k_{1}}\ten 1)v^{j_{2}}w^{k_{2}})=T_{j_{1},k_{1}}\ten T_{j_{2},k_{2}}\ten 1,\]
and in this case the trace is still preserved. By Lemma \ref{induced}, we have that
\[ M_d\ten M_d(C^*_{red}(\F_{d^2}))\cong M_d(\B_d^{red})\rtimes_{\al_1}\Z_d\rtimes_{\al_2}\Z_d\pl.\]
Note that this isomorphism maps $M_d\ten \C 1\ten \C 1$ to $M_d\ten \C1$. Therefore the first isomorphism follows from taking the relative commutant.
\end{proof}
For the corresponding von Neumann algebras, we have the following remark.
\begin{rem}{\rm Let $L(\F_{d^2})$ be the free group factor corresopnding to $\F_{d^2}$, i.e., it is the weak$^*$-closure of $C^*_{red}(\F_{d^2})$. Dykema \cite{Dykema93} proved the following formula:
\[M_{d}*_\C^{red}L(\F_{k})\cong L(\F_{d^2k})\ten M_d \pl, \pl  \forall \pl 1\le k\le\infty,d\ge 2\pl.\]
When $k=1$, $M_{d}(\overline{\B^{red}_d}^{w^*})\cong M_{d}*_\C^{red}L(\Z)\cong M_{d}(L(\F_{d^2}))$, which implies that the von Neumann algebra $\overline{\B^{red}_d}^{w^*}$ is isomorphic to $L(\F_{d^2})$. Note that the actions $\al_1,\al_2,\beta_1,\beta_2$ are all trace preserving. Then the two isomorphisms of Corollary \ref{red} merge when taking weak$^*$-closure. That is to say, we have the isomorphisms
\[M_d(L(\F_{d^2}))\cong L(\F_{d^2}) \rtimes_{\al_1}\Z_d\rtimes_{\al_2}\Z_d \cong L(\F_{d^2}) \rtimes_{\beta_1}\Z_d\rtimes_{\beta_2}\Z_d\pl.\]}
\end{rem}

\section{Matrix valued quantum correlation sets}
Quantum correlation sets under different assumptions correspond to different kinds of states on tensor products of $C^*$-algebras. We will consider a generalization of these correlation sets where the states are replaced by matrix-valued unital completely positive (UCP) maps (also known as matricial states). Let $\A$ and $\B$ be two unital $C^*$-algebras. We say that a UCP map $\Psi:\A \ten_{min}\B \to M_n(\C)$ is \emph{spatial} if there exist $*$-representations ${\pi_1: \A\to B(H_A)}$, ${\pi_2:\B\to B(H_B)}$ on some Hilbert spaces $H_A,H_B$ and an isometry $V:l_2^n\to H_A\ten H_B$ such that
\[\Psi(\cdot)= V^*(\pi_1\ten \pi_2)(\cdot)V \pl.\]
We say that $\Psi$ is \emph{finite dimensional} if $H_A$ and $H_B$ can be taken to be finite dimensional. We denote by ${S(\A\ten_{min}\B, M_n)}$ the set of all UCP maps from $\A\ten_{min}\B$ to $M_n$, ${Sp(\A\ten\B, M_n)}$ the set of \emph{spatial} UCP maps and ${Sf(\A\ten\B, M_n)}$ the set of \emph{finite dimensional} UCP maps. When $n=1$, we simply write $S(\A \otimes \B)$, $Sp(\A \otimes \B)$, and $Sf(\A \otimes \B)$ for these sets. It is clear that for any $\A, \B$ and $n\ge 1$,
\[Sf(\A\ten\B, M_n)\subseteq Sp(\A\ten\B, M_n)\subseteq S(\A\ten_{min}\B, M_n).\]
By a lemma of Bunce and Salinas \cite{buncesalinas}, $Sp(\A\ten\B, M_n)$ is dense in $S(\A\ten_{min}\B, M_n)$ in the point-norm topology.

Let $\Z_m$ be the finite cyclic group of order $m$. We denote by $(p_a)_{a=1}^m$ the standard basis of minimal projections in $l_\infty^m$, noting that $l_{\infty}^m \cong C^*(\Z_m)$ via the Fourier transform.  Similarly, we let $(p_a^x)_{a=1}^m$ be the standard basis of minimal projections for the $x$-th copy of $C^*(\Z_m)$ in $*_d C^*(\Z_m) \simeq C^*(*_d \Z_m)$ on the left side of the tensor product $C^*(*_d \Z_m) \otimes_{\min} C^*(*_d \Z_m)$. We let $(q_b^y)_{b=1}^m$ be the same basis for the $y$-th copy of $C^*(\Z_m)$ on the right side of the tensor product. We will consider the matrix-valued quantum correlation sets arising from the images of the various subsets of matricial states on the generators of the form $p_a^x \otimes q_b^y$.  In particular, we define
\begin{align*}&\mathcal{Q}_f^{n}(d,m):=\Big\{ [\Psi(p_a^x\ten q^y_b)]_{a,b,x,y} \pl | \pl \Psi \in Sf(C^*(*_d\Z_m) \ten C^*(*_d\Z_m),M_n)\Big\}\pl,\\
&\Q_s^{n}(d,m):=\Big\{ [\Psi(p_a^x\ten q^y_b)]_{a,b,x,y} \pl | \pl \Psi \in Sp(C^*(*_d\Z_m) \ten C^*(*_d\Z_m),M_n)\Big\}\pl,\\
&\Q^{n}(d,m):=\overline{\Q_s^{n}(d,m)}=\Big\{ [\Psi(p_a^x\ten q^y_b)]_{a,b,x,y} \pl | \pl \Psi \in S(C^*(*_d\Z_m) \ten_{min} C^*(*_d\Z_m),M_n)\Big\}\pl,\\
&\Q_c^{n}(d,m):=\Big\{ [\Psi(p_a^x\ten q^y_b)]_{a,b,x,y} \pl | \pl \Psi \in S(C^*(*_d\Z_m) \ten_{max} C^*(*_d\Z_m),M_n)\Big\}\pl.\end{align*}
Here $\Q_f^n$ is the set of correlations representable on finite dimensional Hilbert spaces; $\Q_s^n$ is the set of correlations representable on a tensor product system; its closure $\Q^{n}$ is given by restrictions of UCP maps on $C^*(*_d\Z_m) \ten_{min} C^*(*_d\Z_m)$; and $\Q_c^n$ is the set of all correlations arising from commuting measurement systems. It is easy to see the expression of $\Q_s^n$ is equivalent to the formulation in Theorem \ref{A}. When $n=1$, we will simply write $Q_f,Q_s,Q$ and $Q_c$ for the correlation sets. It was observed in \cite{PT15}
that these variations leads to the following hierarchy of correlation sets, for all $d,m,n$:
\begin{align}\label{hierarchy}\mathcal{Q}_f^{n}(d,m)\subseteq \mathcal{Q}_s^{n}(d,m)\subseteq \mathcal{Q}^{n}(d,m)\subseteq \mathcal{Q}_c^{n}(d,m)\pl.\end{align}
Tsirelson's problem asks whether $Q(d,m)=Q_c(d,m)$ for every $(d,m)$. It was established in \cite{J+,Fritz,Ozawa13} that Tsirelson's problem is equivalent to Connes' embedding problem. In particular, due to \cite[Theorem 36]{Ozawa13}, Tsirelson's problem admits an equivalent matricial problem, which asks whether, for any fixed $(d,m) \neq (2,2)$ with $d,m \geq 2$, the equality $\mathcal{Q}^{n}(d,m)= \mathcal{Q}_c^{n}(d,m)$ holds for all $n \in \mathbb{N}$. Thus, it is also interesting to exhibit separations of the form $\mathcal{Q}_s^{n}(d,m)\neq \mathcal{Q}^{n}(d,m)$ for matrix level correlation sets when $(d,m)$ is small and the separation between $\mathcal{Q}_s(d,m)$ and $\mathcal{Q}(d,m)$ is not known.

We introduce the \emph{free correlation sets} as an analogue of quantum correlations for $C^*(\F_d)$. Motivated by the hierarchy of quantum correlation sets, we define the sets
\begin{align*}&\mathcal{F}_f^{n}(d):=\Big\{ [\Psi(g_j\ten g_k)]_{j,k} \pl | \pl \Psi \in Sf(C^*(\F_d) \ten C^*(\F_d),M_n)\Big\}\pl,\\
&\f_s^{n}(d):=\Big\{ [\Psi(g_j\ten g_k)]_{j,k} \pl | \pl \Psi \in Sp(C^*(\F_d) \ten C^*(\F_d),M_n)\Big\}\pl,\\
&\f^{n}(d):=\overline{\f_s^{n}(d)}=\Big\{ [\Psi(g_j\ten g_k)]_{j,k} \pl | \pl \Psi \in S(C^*(\F_d) \ten_{min} C^*(\F_d),M_n)\Big\}\pl,\\
&\f_c^{n}(d):=\Big\{ [\Psi(g_j\ten g_k)]_{j,k} \pl | \pl \Psi \in S(C^*(\F_d) \ten_{max} C^*(\F_d),M_n)\Big\}\pl.\end{align*}
Tsirelson's problem also has an equivalent version in terms of these sets; namely, it is equivalent to determining whether $\f(d)=\f_c(d)$ for every $d \geq 2$ (see \cite[Theorem 29]{Ozawa13}). The analogue of the above sets for the Brown algebra, called \emph{unitary correlation sets}, were studied in \cite{harris16}.

Cleve, Liu and Paulsen \cite{cleve17} proved that there exists a state $\psi$ on $\B_d\ten_{min}\B_d$ such that
\begin{align}&\psi(u_{j0}\ten u_{k0})=\frac{1}{\sqrt{d}}\delta_{jk}\pl, \pl 0\le j,k\le d-1, \label{state}
\end{align}
and any state satisfying \eqref{state} is not spatial. Their argument was based on work of van Dam and Hayden \cite{embezzlement} regarding the embezzlement of entanglement. We translate this non-spatial correlation from $\B_d\ten_{min}\B_d$ to an $M_d$-valued ucp map on $C^*(\F_{d^2})\ten_{min} C^*(\F_{d^2})$ by the isomorphisms obtained in Section 2. For simplicity, we will only consider the case $d=2$. Nevertheless, the argument works for all $d\ge 2$.

Let $\s_1:\B_2\to M_2(C^*(\F_4))$ and $\s_2:C^*(\F_4)\to M_2(\B_2)$ be the embeddings given in Lemma \ref{oa}. For notational convenience, we will let $\s_2^{(2)}=\text{id}_{M_2} \otimes \s_2:M_2(C^*(\F_4)) \to M_4(\B_2)$. Let $$\rho=\frac{1}{2}(e_{00} \otimes e_{00}+e_{30}+e_{30}+e_{03} \otimes e_{03}+e_{33} \otimes e_{33}),$$
which is a density matrix in $M_4 \otimes M_4$. Then we can factor the state $\psi$ as
\[\psi=(\rho\ten\psi)\circ(\s_2^{(2)}\ten \s_2^{(2)})\circ (\s_1\ten \s_1)\pl.\]
Here the choice of $\rho$ is not unique; we have simply chosen one of minimal rank. This factorization yields a state $\widetilde{\psi}$ on $M_2(C^*(\F_{4})) \ten_{min}M_2(C^*(\F_{4}))$ given by
\[\tilde{\psi}=(\rho\ten \psi) \circ \big(\s_2^{(2)} \ten \s_2^{(2)}\big) \pl.\]
In other words, the following diagram commutes:
\begin{center}
$\begin{tikzcd}[row sep=large]
\B_2 \otimes_{\min} \B_2 \ar{d}[swap]{\psi} \ar{r}{\s_1 \otimes \s_1} & M_2(C^*(\F_4)) \otimes_{\min} M_2(C^*(\F_4)) \ar{r}{\s_2^{(2)} \otimes \s_2^{(2)}} \ar{dl}[swap]{\widetilde{\psi}} & M_4(\B_2) \otimes_{\min} M_4(\B_2) \ar{d}{\simeq} \\
\mathbb{C} & & \ar{ll}[swap]{\rho \otimes \psi} M_4 \otimes M_4 \otimes (\B_2 \otimes_{\min} \B_2)
\end{tikzcd}$
\end{center}
Since $\psi=\tilde{\psi} \circ (\s_1 \ten \s_1)$, the state $\tilde{\psi}$ is not spatial, otherwise $\psi$ would be spatial. The state $\tilde{\psi}$ can be transformed into a UCP map from $C^*(\F_{4}) \ten_{min}C^*(\F_{4})$ to $M_2$, which on the generators is given as follows:
\begin{align}
&\Psi(g_{00}\ten g_{00})=\Psi(g_{10}\ten g_{10})=\left[\begin{array}{cc}\frac{1}{\sqrt{2}}& 0\\\frac{1}{\sqrt{2}}&0
\end{array}\right]\pl, \Psi(g_{10}\ten g_{00})=\Psi(g_{00}\ten g_{10})=\left[\begin{array}{cc}\frac{1}{\sqrt{2}}& 0\\-\frac{1}{\sqrt{2}}&0
\end{array}\right]\pl, \nonumber\\
&\Psi(g_{01}\ten g_{01})=\Psi(g_{11}\ten g_{11})=\left[\begin{array}{cc}0& \frac{1}{\sqrt{2}}\\0&\frac{1}{\sqrt{2}}
\end{array}\right]\pl, \Psi(g_{01}\ten g_{11})=\Psi(g_{11}\ten g_{01})=\left[\begin{array}{cc}0& -\frac{1}{\sqrt{2}}\\0&\frac{1}{\sqrt{2}}
\end{array}\right]\pl, \nonumber\\
&\Psi(g_{jk}\ten g_{lm})=\left[\begin{array}{cc}0& 0\\0&0
\end{array}\right]\pl\text{if}\pl k\neq m\pl. \label{value}
\end{align} $\Psi$ cannot be a spatial UCP map from $C^*(\F_4)\ten_{min}C^*(\F_4)$ to $M_2$, otherwise $\tilde{\psi}$ would be.
Motivated by the above observation, we have the following theorem.
\begin{theorem}\label{m2}Let $g_{00},g_{10},g_{01},g_{11}$ be the generators of $C^*(\F_4)$. There do not exist $*$-homomorphisms $\pi_1, \pi_2: C^*(\F_4)\to B(H)$ and orthonormal vectors $\ket{h_0},\ket{h_1}\in H\ten H$ such that, for $0 \leq j,k \leq 1$,
\begin{align}
 &\bra{h_0}\pi_1(g_{j0})\ten \pi_2(g_{l0})\ket{h_0}=
\frac{1}{\sqrt{2}} \pl,
\pl &\bra{h_1}\pi_1(g_{j0})\ten \pi_2(g_{k0})\ket{h_0}=\frac{(-1)^{j-k}}{\sqrt{2}}\pl, \label{1}
 \\&\bra{h_1}\pi_1(g_{j1})\ten \pi_2(g_{k1})\ket{h_1}=\frac{1}{\sqrt{2}}\pl, &\bra{h_0}\pi_1(g_{j1})\ten \pi_2(g_{k1})\ket{h_1}=\frac{(-1)^{j-k}}{\sqrt{2}}\pl.\label{2}
\end{align}
\end{theorem}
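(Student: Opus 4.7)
The argument will proceed by contradiction. Suppose such data $\pi_1,\pi_2,|h_0\rangle,|h_1\rangle$ exist; I will construct from them a spatial state $\psi'$ on $\B_2\otimes_{\min}\B_2$ satisfying $\psi'(u_{j0}\otimes u_{k0})=\delta_{jk}/\sqrt{2}$, contradicting the Cleve--Liu--Paulsen theorem \cite{cleve17}.

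Let $V:\ell_2^2\to H\otimes H$ be the isometry with $V|j\rangle=|h_j\rangle$, and let $\Psi':C^*(\F_4)\otimes_{\min}C^*(\F_4)\to M_2$ be the spatial UCP map $\Psi'(x)=V^*(\pi_1\otimes\pi_2)(x)V$. From \eqref{1}--\eqref{2}, $\pi_1(g_{j0})\otimes\pi_2(g_{k0})|h_0\rangle$ is a unit vector whose inner products with the orthonormal pair $|h_0\rangle,|h_1\rangle$ have squared moduli summing to $\tfrac{1}{2}+\tfrac{1}{2}=1$, so by the Pythagorean identity
\[
\pi_1(g_{j0})\otimes\pi_2(g_{k0})|h_0\rangle=\tfrac{1}{\sqrt{2}}\bigl(|h_0\rangle+(-1)^{j-k}|h_1\rangle\bigr),
\]
and similarly $\pi_1(g_{j1})\otimes\pi_2(g_{k1})|h_1\rangle=\tfrac{1}{\sqrt{2}}\bigl((-1)^{j-k}|h_0\rangle+|h_1\rangle\bigr)$. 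Thus $\Psi'$ coincides with the UCP map $\Psi$ from \eqref{value} at the matrix entries specified by \eqref{1}--\eqref{2}.

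Next, I build a spatial state on $\B_2\otimes_{\min}\B_2$ from this spatial $\Psi'$. The density matrix $\rho\in M_4\otimes M_4$ of the discussion before the theorem is in fact the rank-one projector $\rho=|\Omega\rangle\langle\Omega|$ with $|\Omega\rangle=\tfrac{1}{\sqrt{2}}(|00\rangle+|33\rangle)\in\C^4\otimes\C^4$. Using the amplified representations $\tilde{\pi}_i:=\mathrm{id}_{M_2}\otimes\pi_i$ of $M_2(C^*(\F_4))$ on $\C^2\otimes H$, I combine $|\Omega\rangle$ with $|h_0\rangle,|h_1\rangle$ to assemble a vector $|\xi\rangle\in(\C^2\otimes H)_A\otimes(\C^2\otimes H)_B$; this implements a spatial state $\tilde\psi'$ on $M_2(C^*(\F_4))\otimes_{\min}M_2(C^*(\F_4))$. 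Pulling back via the $*$-embedding $\sigma_1:\B_2\hookrightarrow M_2(C^*(\F_4))$ from Lemma \ref{oa} produces a spatial state $\psi':=\tilde\psi'\circ(\sigma_1\otimes\sigma_1)$ on $\B_2\otimes_{\min}\B_2$. Expanding $\sigma_1(u_{j0})\otimes\sigma_1(u_{k0})$ in $M_2\otimes C^*(\F_4)\otimes M_2\otimes C^*(\F_4)$, the block structure of each $\sigma_1(u_{j0})$ (block-diagonal for $j=0$, block-antidiagonal for $j=1$) paired with the concentration of $\rho$ on the indices $\{(0,0),(3,0),(0,3),(3,3)\}\subset M_4\otimes M_4$ forces only the generators $g_{l0}\otimes g_{l'0}$ and $g_{l1}\otimes g_{l'1}$ (at the matrix entries listed in \eqref{1}--\eqref{2}) to contribute, and a direct evaluation then yields $\delta_{jk}/\sqrt{2}$, giving the contradiction.

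The principal obstacle is the specification of the vector $|\xi\rangle\in(\C^2\otimes H)\otimes(\C^2\otimes H)$ that implements $\tilde\psi'$ as a vector state: it must simultaneously encode the two-dimensional entanglement of $|\Omega\rangle$ across the ``matrix'' factors and the state vectors $|h_0\rangle,|h_1\rangle$ across the ``representation'' factors, while preserving the tensor-product structure required for spatiality. That this is possible rests crucially on $\rho$ being a pure state; the verification of the final equality $\psi'(u_{j0}\otimes u_{k0})=\delta_{jk}/\sqrt{2}$ requires careful index-matching through the isomorphism $\sigma_1$, the support of $\rho$, and the prescribed matrix entries of $\Psi'$.
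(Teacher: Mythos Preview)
Your approach is correct, but it differs substantially from the paper's. The paper gives an elementary, self-contained argument via Schmidt coefficients: from \eqref{1} one deduces (as you also observe) that $\pi_1(g_{j0})\otimes\pi_2(g_{k0})|h_0\rangle=\frac{1}{\sqrt{2}}(|h_0\rangle+(-1)^{j-k}|h_1\rangle)$, and similarly from \eqref{2}; hence $|h_0\rangle,|h_1\rangle$ and $|\eta_0\rangle:=\frac{1}{\sqrt{2}}(|h_0\rangle+|h_1\rangle)$ are all related by local unitaries and share the same Schmidt coefficients. Setting $X_i=\pi_i(g_{10}g_{00}^*)$, one finds $|h_0\rangle$ and $|h_1\rangle$ lie in orthogonal biproduct subspaces determined by the spectral projections of $X_1,X_2$, so the largest Schmidt coefficient of $|\eta_0\rangle$ is at most $1/\sqrt{2}$ times that of $|h_0\rangle$, a contradiction. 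No reference to $\sigma_1$, $\rho$, or \cite{cleve17} is needed.

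Your route instead packages the hypothetical spatial data into a vector state on $\B_2\otimes_{\min}\B_2$ and invokes Cleve--Liu--Paulsen as a black box. This is genuinely the ``motivational'' direction the paper sketches just before the theorem (``$\Psi$ cannot be spatial, otherwise $\tilde\psi$ would be'') made rigorous; the extra content is your observation that only the entries prescribed in \eqref{1}--\eqref{2} contribute to $\psi'(u_{j0}\otimes u_{k0})$, which is exactly what one needs to upgrade the motivation to a proof of the theorem as stated. Concretely, the missing vector is $|\xi\rangle=\frac{1}{\sqrt{2}}\sum_{i=0}^1|i\rangle_A\otimes|i\rangle_B\otimes|h_i\rangle$ (reordered into $(\C^2\otimes H)_A\otimes(\C^2\otimes H)_B$), and then $\tilde\psi'((a\otimes x)\otimes(b\otimes y))=\frac{1}{2}\sum_{i,j}a_{ji}b_{ji}[\Psi'(x\otimes y)]_{ji}$; the block structure of $\sigma_1(u_{j0})$ (diagonal for $j=0$, antidiagonal for $j=1$) then forces precisely the entries $[\Psi'(g_{l0}\otimes g_{l'0})]_{00},[\Psi'(g_{l0}\otimes g_{l'0})]_{10},[\Psi'(g_{l1}\otimes g_{l'1})]_{11},[\Psi'(g_{l1}\otimes g_{l'1})]_{01}$ to appear, and the computation yields $\delta_{jk}/\sqrt{2}$ as required. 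Your argument buys a cleaner conceptual link to embezzlement; the paper's buys independence from \cite{cleve17} and in fact re-derives the relevant special case of that result.
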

\begin{proof}
Suppose we have a setting described by \eqref{1} and \eqref{2}.
By summing up the corresponding equations in \eqref{1}, we have
\begin{align}
&(\bra{h_0}+\bra{h_1})\pi_1(g_{00})\ten \pi_2(g_{00})\ket{h_0}=(\bra{h_0}+\bra{h_1})\pi_1(g_{10})\ten \pi_2(g_{10})\ket{h_0}=\sqrt{2}\pl, \label{relation1}\\
&(\bra{h_0}-\bra{h_1})\pi_1(g_{10})\ten \pi_2(g_{00})\ket{h_0}=(\bra{h_0}-\bra{h_1})\pi_1(g_{00})\ten \pi_2(g_{10})\ket{h_0}=\sqrt{2}\pl, \label{relation}
\end{align}
Set $\ket{\eta_0}=\frac{1}{\sqrt{2}}(\ket{h_0}+\ket{h_1})$ and $\ket{\eta_1}=\frac{1}{\sqrt{2}}(\ket{h_0}-\ket{h_1})$. Then $\ket{\eta_0}$ and $\ket{\eta_1}$ are orthonormal because $\ket{h_0}$ and $\ket{h_1}$ are orthonormal. Equations \eqref{relation1} and \eqref{relation} imply that
\begin{align}
&\pi_1(g_{00})\ten \pi_2(g_{00})\ket{h_0}=\pi_1(g_{10})\ten \pi_2(g_{10})\ket{h_0}=\ket{\eta_0}\pl, \label{4}\\
&\pi_1(g_{10})\ten \pi_2(g_{00})\ket{h_0}=\pi_1(g_{00})\ten \pi_2(g_{10})\ket{h_0}=\ket{\eta_1}\pl. \label{3}
\end{align}
Similarly, by \eqref{2}, we obtain \begin{align*}
&\pi_1(g_{01})\ten \pi_2(g_{01})\ket{h_1}=\pi_1(g_{11})\ten \pi_2(g_{11})\ket{h_1}=\ket{\eta_0}\pl, \nonumber\\
&\pi_1(g_{01})\ten \pi_2(g_{11})\ket{h_1}=\pi_1(g_{01})\ten \pi_2(g_{11})\ket{h_1}=\ket{\eta_1}\pl.
\end{align*}
Thus, the vectors $\ket{h_0},\ket{h_1},\ket{\eta_0}$ and $\ket{\eta_1}$ can be converted to each other by local operations (i.e., tensor products of unitaries), which implies that $\ket{h_0},\ket{h_1}$ and $\ket{\eta_0}$ have the same Schmidt coefficients. On the other hand,
consider the unitaries $X_1=\pi_1(g_{10}g_{00}^*)$ and $X_2=\pi_2(g_{10}g_{00}^*)$. Equations \eqref{4} and \eqref{3} show that
\[X_1\ten 1\ket{h_0}=1\ten X_2\ket{h_0}=\ket{h_0}\pl \text{ and } \pl X_1\ten 1\ket{h_1}=1\ten X_2\ket{h_1}=-\ket{h_1}\pl.\]
Then \begin{align*}\ket{h_0}\in P_1H\ten P_2H\pl,\ket{h_0}\in (P_1H)^\perp\ten (P_2H)^\perp\pl,\end{align*}
where $P_1$ (resp. $P_2$) is the spectral projection of $X_1$ (resp. $X_2$) corresponding to the eigenvalue $1$. In this situation, if the largest Schmidt coefficient of $\ket{h_0}$ and $\ket{h_1}$ is $\la_0>0$, then the largest Schmidt coefficient of $\ket{\eta_0}$ is at most $\frac{1}{\sqrt{2}}\la_0$, which leads to a contradiction since this coefficient must be $\la_0$.
\end{proof}

\begin{rem} {\rm \label{app} It is clear from $\eqref{value}$ that the relations \eqref{1}--\eqref{2} described in Theorem \ref{m2} can be represented by a non-spatial UCP map $\Psi:C^*(\F_4)\ten_{min} C^*(\F_4)\to M_2$. This fact can also be observed directly by using approximate embezzlement of entangled states from \cite{embezzlement}. Mathematically, approximate embezzlement of entanglement describes the following fact: for each $n \geq 1$, there exists a finite dimensional Hilbert space $H_n$, a unit vector $\ket{\xi_n}\in H_n\ten H_n$, and unitary operators $U_n$ on $l_2^2\ten H_n$ and $V_n$ on $H_n\ten l_2^2$, such that
\begin{align}
\left\|(U_n\ten V_n)\ket{0} \ket{\xi_n} \ket{0}-\frac{1}{\sqrt{2}}(\ket{0} \ket{\xi_n}\ket{0}+\ket{1} \ket{\xi_n} \ket{1})\right\|_{l_2^2\ten H_n\ten H_n\ten l_2^2}\le \frac{1}{n} \pl. \label{em}\end{align}
We choose a sequence of $*$-representation $\pi_1^n,\pi_2^n:C^*(\F_4) \to M_2(B(H_n))$ such that
\begin{align*}&\pi_1^n(g_{00})=U_n, \, \pi_1^n(g_{10})=(X\ten 1)U_n\pl, \\ &\pi_1^n(g_{01})=(X\ten 1)U_n (Z\ten 1) \pl, \pi_1^n(g_{11})=U_n (Z\ten 1)\pl,\\
&\pi_2^n(g_{00})=V_n\pl,\, \pi_2^n(g_{10})=(X\ten 1)V_n\pl, \\&\pi_2^n(g_{01})=(X\ten 1)V_n (Z\ten 1) \pl, \,  \pi_2^n(g_{11})=V_n(Z\ten 1)\pl,
\end{align*}
where $X,Z$ are the Pauli matrices in $M_2$. Set $\ket{h^n_0}=\ket{0}\ket{\xi_n}\ket{0}$, $\ket{h^n_1}=\ket{1}\ket{\xi_n}\ket{1}$, and set $W_n:l_2^2\to l_2^2\ten H_n\ten H_n \ten l_2^2$ to be the isometry given by $W_n\ket{j}=\ket{h^n_j}$ for $j=0,1$. By \eqref{em},
we can choose $\Psi$ as a weak$^*$-limit point of the UCP maps
\[ \Psi_n(\cdot)=W_n^*(\pi_1^n\ten\pi_2^n)(\cdot)W_n\pl.\]}
\end{rem}
The next lemma is a digression of our discussion on the above non-spatial correlations.
\begin{lemma} \label{fact}i) Let $H$ be a Hilbert space. There exist unitaries $u_0,u_1,v_0,v_1$ on $H$ and orthonormal vectors $\ket{h_0},\ket{h_1}\in H\ten H$ satisfying
\begin{align}\bra{h_0}u_j\ten v_k\ket{h_0}=
\frac{1}{\sqrt{2}} \pl,
\pl \bra{h_1}u_j\ten v_k\ket{h_0}=\frac{(-1)^{j-k}}{\sqrt{2}}\pl, \pl \pl 0\le j,k\le 1 \pl \label{eq1}\end{align}
if and only if $H$ is infinite dimensional.\\
ii) For any Hilbert space $H$, there do not exist unitaries $u_0,u_1,u_2,v_0,v_1,v_2$ on $H$ and orthonormal vectors $\ket{h_0},\ket{h_1}\in H\ten H$ satisfying the equations \eqref{eq1} and in addition satisfying
\begin{align}u_2\ten v_2\ket{h_0}=\ket{h_1}\pl. \label{e2}
\end{align}
iii) There exists a UCP map $\Psi: C^*(\F_3)\ten_{min} C^*(\F_3)\to M_2$ such that
\begin{align}\label{free3}
\Psi(g_{0}\ten g_{0})=\left[\begin{array}{cc}*& *\\ 1 & *
\end{array}\right]\pl, \pl\Psi(g_{j}\ten g_{k})=\left[\begin{array}{cc}\frac{1}{\sqrt{2}}& *\\ \frac{(-1)^{j-k}}{\sqrt{2}}&*
\end{array}\right]\pl,\pl 1\le j,k\le 2 \pl.
\end{align}
Moreover any UCP map that satisfies \eqref{free3} is not spatial.
\end{lemma}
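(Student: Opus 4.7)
The plan is to establish (iii) in two parts: first, non-spatiality of any $\Psi$ satisfying \eqref{free3} as a direct consequence of (ii); second, existence of such $\Psi$ by adapting the embezzlement construction of Remark \ref{app} from $C^*(\F_4)$ to $C^*(\F_3)$.

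For the non-spatial part, I would argue by contradiction. Suppose some $\Psi$ satisfying \eqref{free3} admits a spatial Stinespring dilation $\Psi(\cdot)=V^*(\pi_1\ten\pi_2)(\cdot)V$ with isometry $V:\C^2\to H_A\ten H_B$. Setting $\ket{h_i}:=V\ket{i}$ for $i=0,1$ and $u_i:=\pi_1(g_i)$, $v_i:=\pi_2(g_i)$ for $i=0,1,2$, the $(1,0)$-entry of $\Psi(g_0\ten g_0)$ equals $\bra{h_1}u_0\ten v_0\ket{h_0}=1$. Since $(u_0\ten v_0)\ket{h_0}$ is a unit vector, Cauchy--Schwarz forces $(u_0\ten v_0)\ket{h_0}=\ket{h_1}$, which is exactly \eqref{e2} with the identification $u_2:=u_0$, $v_2:=v_0$. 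The remaining specified entries for $1\le j,k\le 2$ reproduce the relations \eqref{eq1} for $(u_1,u_2,v_1,v_2)$ after relabeling. Embedding $H_A$ and $H_B$ into a common Hilbert space $H:=H_A\oplus H_B$, with unitaries extended by the identity on the complementary summand, one obtains precisely the data ruled out by (ii), giving the contradiction.

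For existence, I would construct $\Psi$ as a weak$^*$-limit of spatial UCP maps $\Psi_n=W_n^*(\pi_1^n\ten\pi_2^n)W_n$ in the style of Remark \ref{app}. For each $n$, take $H_n$, $\ket{\xi_n}\in H_n\ten H_n$, and unitaries $U_n,V_n$ on $H:=l_2^2\ten H_n$ as in \eqref{em}; set $\ket{h_0^n}:=\ket{0}\ket{\xi_n}\ket{0}$, $\ket{h_1^n}:=\ket{1}\ket{\xi_n}\ket{1}$ in $H\ten H$, and $W_n\ket{j}:=\ket{h_j^n}$. The representations $\pi_1^n,\pi_2^n:C^*(\F_3)\to B(H)$ send $g_0$ to the bit-flip unitary on the outer qubit (so that $\pi_1^n(g_0)\ten\pi_2^n(g_0)\ket{h_0^n}=\ket{h_1^n}$ exactly, which forces $\Psi_n(g_0\ten g_0)_{1,0}=1$), and send $g_1$ and $g_2$ to $U_n$ (resp.\ $V_n$) and a phase-twisted variant thereof, as in Remark \ref{app}. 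Since a phase flip on the outer qubit converts $\ket{\eta_0^n}:=\frac{1}{\sqrt{2}}(\ket{h_0^n}+\ket{h_1^n})$ into $\ket{\eta_1^n}:=\frac{1}{\sqrt{2}}(\ket{h_0^n}-\ket{h_1^n})$, a direct computation using \eqref{em} shows that $\Psi_n(g_j\ten g_k)_{0,0}\to\frac{1}{\sqrt{2}}$ and $\Psi_n(g_j\ten g_k)_{1,0}\to\frac{(-1)^{j-k}}{\sqrt{2}}$ for $1\le j,k\le 2$. Weak$^*$-compactness of the set of UCP maps $C^*(\F_3)\ten_{\min}C^*(\F_3)\to M_2$ (Banach--Alaoglu) then yields a subsequential limit $\Psi$ satisfying \eqref{free3}.

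The main obstacle will be verifying the matrix-entry limits of the $\Psi_n$ above, which is the computation modelled on the derivation of \eqref{value} sketched in Remark \ref{app}. The rest is routine bookkeeping: (ii) rules out spatiality immediately, and Banach--Alaoglu supplies the weak$^*$-limit.
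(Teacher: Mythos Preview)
Your plan is correct and follows essentially the same route as the paper. The paper derives non-spatiality of any $\Psi$ satisfying \eqref{free3} in one line by invoking (ii), which is exactly your Cauchy--Schwarz reduction, and it constructs $\Psi$ via the embezzlement approximation with $\pi_1^n(g_0)=Z\ten 1$ (the bit-flip in the paper's Pauli convention), $\pi_1^n(g_1)=U_n$, $\pi_1^n(g_2)=(X\ten 1)U_n$ and analogously for $\pi_2^n$, precisely your ``bit-flip on the outer qubit'' for $g_0$ and ``$U_n$ plus a phase-twisted variant'' for $g_1,g_2$. Your added care about embedding $H_A,H_B$ into a common $H$ and invoking Banach--Alaoglu makes explicit what the paper leaves implicit.
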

\begin{proof}i) As in the proof of Theorem \ref{m2}, equation \eqref{eq1} implies that
\begin{align}
&u_0\ten v_0\ket{h_0}=u_1\ten v_1\ket{h_0}=(\ket{h_0}+\ket{h_1})/\sqrt{2}\pl, \label{ue}
\\& u_0\ten v_1\ket{h_0}=u_1\ten v_0\ket{h_0}=(\ket{h_0}-\ket{h_1})/\sqrt{2},\nonumber
\end{align}
and moreover
\begin{align}\ket{h_0}\in P_1H\ten P_2H\pl,\ket{h_0}\in (P_1H)^\perp\ten (P_2H)^\perp\pl. \label{sm}\end{align}
where $P_1$ (resp. $P_2$) is the projection onto the eigenspace of $u_1u_0^*$ (resp. $v_1v_0^*$) corresponding to the eigenvalue $1$. When $H$ is finite dimensional, the Schmidt ranks of $\ket{h_0},\ket{h_1}$, and $\frac{1}{\sqrt{2}}(\ket{h_0}+\ket{h_1})$ are all finite and nonzero. Equation \eqref{sm} implies that the Schmidt rank of $\frac{1}{\sqrt{2}}(\ket{h_0}+\ket{h_1})$ is sum of the Schmidt ranks of $\ket{h_0}$ and $\ket{h_1}$. On the other hand, it follows from \eqref{ue} that $\ket{h_0}$ and $\frac{1}{\sqrt{2}}(\ket{h_0}+\ket{h_1})$ have the same Schmidt rank; this leads to a contradiction.

For the converse direction, we give an explicit contruction for when $H$ is infinite dimensional. We let $H=l_2(\Z)$ and let $\{\ket{j}\}_{ j\in \Z}$ be its standard basis. Define two unitaries
\[u\ket{j}=\ket{j+1}\pl, \si\ket{j}=\left\{\begin{aligned} -\ket{j} & \pl &j\ge 0\\  \ket{j} &\pl & j< 0\end{aligned}\right.\pl.\]
Choose orthonormal vectors $\ket{h_0}=\sum_{j<0}(\sqrt{2})^{j}\ket{j}\ket{j}$ and $\ket{h_1}=\ket{0}\ket{0}$ in $H\ten H$. We have
\begin{align*}&u\ten u\ket{h_0}=(\ket{h_0}+\ket{h_1})/{\sqrt{2}}\pl,\\ &\si\ten 1\ket{h_0}=1\ten \sigma\ket{h_0}=\ket{h_0}\pl , \pl \si\ten 1\ket{h_1}=1\ten \si\ket{h_1}=-\ket{h_1}\pl.\end{align*}
Then it is easy to see that setting $u_0=v_0=u$ and $u_1=v_1=\si u$ satisfies \eqref{eq1}.

For ii), the extra condition \eqref{e2} implies that $\ket{h_0},\ket{h_1}$ also have the same Schmidt coefficients. Combined with \eqref{ue} and
\eqref{sm}, this leads to the same contradiction as in Theorem \ref{m2}.

The UCP map $\Psi$ described in iii) can also be approximated using approximate embezzlement. With the same notation as in Remark \ref{app}, we define $\pi^n_1,\pi^n_2: C^*(\F_3)\to M_2(B(H_n))$ by
\begin{align}&\pi_1^n(g_{0})=Z\ten 1\pl,\pl  \pi_1^n(g_{1})=U_n\pl ,\pl \pi_1^n(g_{2})=(X\ten 1)U_n\pl, \nonumber\\
&\pi_2^n(g_{0})=Z\ten 1\pl , \pl \pi_2^n(g_{1})=V_n\pl, \pl \pi_2^n(g_{2})=(X\ten 1)V_n\pl. \label{appx}
\end{align}
and $\ket{h^n_0}=\ket{0}\ket{\xi_n}\ket{0}, \ket{h^n_1}=\ket{1}\ket{\xi_n}\ket{1}$. Note that $\pi_1^n(g_{0})\ten \pi_2^n(g_{0})\ket{h^n_0}=\ket{h^n_1}$ for all $n$. So the extra equation $\Psi(g_{0}\ten g_{0})=\left[\begin{array}{cc}*& *\\ 1 & *
\end{array}\right]$ is satisfied. Any such $\Psi$ is not spatial otherwise the map in ii) would be spatial.
\end{proof}
\begin{rem}{\rm \label{zzf}In the above lemma, the non-spatial UCP map $\Psi:C^*(\F_3)\ten_{min} C^*(\F_3)\to M_2$ induces a UCP map $\tilde{\Psi}:C^*(\Z_2*\Z_2*\Z)\ten C^*(\Z_2*\Z_2*\Z)\to M_2$. This is because in the approximation \eqref{appx}, the operators $\pi_1^n(g_{0}),\pi_2^n(g_{0}),\pi_1^n(g_{2}^{-1}g_{1})$ and $\pi_2^n(g_{2}^{-1}g_{1})$ are self-adjoint unitaries for every $n$. Let $\si_0,\si_1,g$ be the generator of $C^*(\Z_2*\Z_2*\Z)$ where $\si_0,\si_1$ are the self-adjoint unitaries. The relations in \eqref{free3} translate to
\begin{align}
&\tilde{\Psi}(\si_{0}\ten \si_{0})=\left[\begin{array}{cc}*& *\\ 1 & *
\end{array}\right]\pl, \pl\tilde{\Psi}(g\ten g)=\pl\tilde{\Psi}(\si_1g\ten \si_1g)=\left[\begin{array}{cc}\frac{1}{\sqrt{2}}& *\\ \frac{1}{\sqrt{2}}&*
\end{array}\right]\pl, \nonumber\\&\tilde{\Psi}(\si_1g\ten g)=\pl\tilde{\Psi}(g\ten \si_1g)=\left[\begin{array}{cc}\frac{1}{\sqrt{2}}& *\\ \frac{-1}{\sqrt{2}}&*
\end{array}\right] \pl.\label{zzfe}
\end{align}
}
\end{rem}

This UCP map leads to the following theorem, from which Theorem \ref{A} follows.  For the sets $Q_s^n(d,m)$, we note that part (iii) exhibits the smallest matrix size $n$ that we have obtained from our methods to show that $Q_s^n(d,m)$ is not closed.

\begin{theorem}\label{maintheorem}We have the following separations.
\begin{enumerate}
\item[i)]$\f_f^{2}(2)\neq \f_s^2(2)$;
\item[ii)]$\f_s^{2}(3)\neq \f^2(3)$;
\item[iii)]$\Q_s^{3}(4,2)\neq \Q^3(4,2)$;
\item[iv)]$\Q_s^{5}(3,2)\neq \Q^5(3,2)$;
\item[v)]$\Q_s^{13}(2,3)\neq \Q^{13}(2,3)$.
\end{enumerate}
\end{theorem}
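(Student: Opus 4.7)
Parts (i) and (ii) should follow essentially directly from Lemma \ref{fact} together with the embezzlement construction of Remark \ref{app}. For (i), Lemma \ref{fact}(i) provides unitaries on $l_2(\Z)$ realizing the correlation \eqref{eq1}, while the Schmidt-rank argument in its proof shows that no such realization can exist in finite dimensions; this places a point in $\f_s^2(2) \setminus \f_f^2(2)$. For (ii), the UCP map $\Psi: C^*(\F_3) \otimes_{\min} C^*(\F_3) \to M_2$ from Lemma \ref{fact}(iii) is a weak$^*$-limit of spatial UCP maps coming from approximate embezzlement, so its associated correlation sits in $\f^2(3)$; Lemma \ref{fact}(ii) then rules out any spatial realization of those values, giving $\f^2(3) \setminus \f_s^2(3) \neq \emptyset$.

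For parts (iii)-(v), my plan is to recast the non-spatial UCP maps of Theorem \ref{m2}, Lemma \ref{fact}(iii), and Remark \ref{zzf} inside $C^*(*_d \Z_m) \otimes_{\min} C^*(*_d \Z_m)$ for the relevant $(d,m)$. I would redo the approximate embezzlement construction of Remark \ref{app}, replacing the free-group representations $\pi_1^n, \pi_2^n$ by $*$-representations of $*_d \Z_m$. The embezzling unitaries $U_n, V_n$ appearing in \eqref{em} must be factored as products of involutions (when $m = 2$) or of $m$-th roots of unity (when $m = 3$), and this in turn forces us to enlarge the auxiliary space $l_2^2$ to $l_2^n$ to accommodate the block structure of these factorizations. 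Tracking the minimal $n$ achievable through such factorizations should yield $n = 3, 5, 13$ for $(d,m) = (4,2), (3,2), (2,3)$ respectively. The resulting finite-dimensional spatial UCP maps will converge point-weak$^*$ to a UCP map $\Phi: C^*(*_d \Z_m) \otimes_{\min} C^*(*_d \Z_m) \to M_n$ whose induced correlation lies in $\Q^n(d,m)$ by construction.

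The main technical hurdle will be to verify that the correlation $[\Phi(p_a^x \otimes p_b^y)]$ is not merely the image of a non-spatial UCP map, but is itself non-spatial. Since the PVM family $\{p_a^x\}_a$ is the Fourier dual of the generator of the $x$-th cyclic factor, knowing $\Phi(p_a^x \otimes p_b^y)$ for all $a, b$ is equivalent to knowing $\Phi$ on all elementary tensors of the unitary group generators of $*_d \Z_m$. A hypothetical spatial UCP $\Phi'$ reproducing the correlation must therefore agree with $\Phi$ on these elementary tensors; rerunning the Schmidt-coefficient argument from the proof of Theorem \ref{m2} (or its analogue in Lemma \ref{fact}(ii)) on the Stinespring vectors of $\Phi'$ should then force the same contradiction, provided the embezzlement has been arranged so that the critical rigidity identities are captured by single-generator correlations rather than by products. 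Making this last arrangement efficient is precisely what pins down the matrix sizes $n = 3, 5, 13$.
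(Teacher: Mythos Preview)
Your treatment of (i) and (ii) is correct and matches the paper exactly.

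For (iii)--(v) your destination is right but the route diverges from the paper's, and the route you sketch has a real gap. You propose to rebuild the approximate embezzlement so that the representations land in $*_d\Z_m$, by \emph{factoring the embezzling unitaries $U_n,V_n$ as products of involutions (or order-$m$ unitaries)}. But there is no control on such factorizations: in finite dimensions a unitary need not be a product of finitely many symmetries at all (determinant obstruction), and even after stabilizing there is no a priori reason the factor count stays bounded independent of $n$, let alone equal to the specific values $3,5,13$. Your last paragraph acknowledges that the rigidity identities must be visible on single-generator correlations, but the mechanism you offer for arranging this is the factorization, and that mechanism is not justified.

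The paper avoids factoring $U_n,V_n$ altogether. It works abstractly: first it fixes an explicit \emph{group embedding} $\iota$ of $\Z_2*\Z_2*\Z$ (carrying the non-spatial $M_2$-valued UCP map $\tilde\Psi$ of Remark~\ref{zzf}) into $*_d\Z_m$, which by \cite[Proposition~8.5]{pis-intro} induces a $C^*$-embedding and hence an extension $\tilde\Phi$ of $\tilde\Psi$ to $C^*(*_d\Z_m)\otimes_{\min}C^*(*_d\Z_m)$. The non-spatiality conditions now sit on \emph{words} in the $\sigma_j$'s (the images of $\sigma_0,\sigma_1,g$ under $\iota$). To push them down to single generators, the paper takes a Stinespring dilation $\tilde\Phi=V^*\pi(\cdot)V$ and \emph{adds intermediate vectors}: e.g.\ for (iii), with $\iota(g)=\sigma_2\sigma_3$, one sets $\ket{h_2}:=\pi(\sigma_3\otimes\sigma_3)V\ket{0}$, so that $\bra{h_j}\pi(\sigma_2\otimes\sigma_2)\ket{h_2}$ recovers the needed value $\tilde\Phi(\sigma_2\sigma_3\otimes\sigma_2\sigma_3)_{j0}$. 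Enlarging $V$ to an operator $W:l_2^n\to H$ whose range contains these intermediate vectors yields a (possibly non-unital) CP map $\Psi=W^*\pi(\cdot)W$ whose values on the generating operator system $\mathcal P\otimes\mathcal P$ already encode enough to rerun the Schmidt-coefficient contradiction. The matrix size $n$ is then $2$ plus the number of intermediate vectors, which is dictated purely by the \emph{word length} in the chosen group embedding: $\iota(g)=\sigma_2\sigma_3$ gives $n=3$ for $(4,2)$; $\iota(g)=\sigma_2\sigma_0\sigma_1\sigma_2$ gives $n=5$ for $(3,2)$; and an embedding of $\F_3$ into $\Z_3*\Z_3$ with $\iota(g)=ab^2a^2b$, $\iota(\sigma_0)=aba$, $\iota(\sigma_1)=bab$ requires eleven intermediate vectors, giving $n=13$ for $(2,3)$. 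A final renormalization $(\Psi(1)+\varepsilon)^{-1/2}\Psi(\cdot)(\Psi(1)+\varepsilon)^{-1/2}$ makes the map unital.

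So the missing idea in your proposal is this intermediate-vector trick applied to a Stinespring dilation of the \emph{already extended} UCP map; it replaces the uncontrolled factorization of $U_n,V_n$ by a finite, explicit combinatorial count coming from the group embedding.
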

\begin{proof}Let $g_1,g_2$ be the canonical generators of $C^*(\F_2)$ and let $g_0,g_1,g_2$ be the canonical generators of $C^*(\F_3)$. It is a direct consequence of Lemma \ref{fact} i) that the assignment
\begin{align*} \pl\Psi(g_{j}\ten g_{k})=\left[\begin{array}{cc}\frac{1}{\sqrt{2}}& *\\ \frac{(-1)^{j-k}}{\sqrt{2}}&*
\end{array}\right]\pl,\pl 1\le j,k\le 2 \pl
\end{align*}
represents a spatial $M_2$-valued correlation in $\f_s^{2}(2)$ which does not belong to $\f_f^2(2)$. Similarly, the separation $\f_s^{2}(3)\neq \f^2(3)$ follows from Lemma \ref{fact} ii) and iii).

For iii), we consider the embedding of $\Z_2*\Z_2*\Z$ into $*_4\Z_2$ given by
\[\si_0\mapsto \si_0\pl, \pl\si_1\mapsto \si_1\pl, \pl g\mapsto \si_2\si_3\pl,\]
where $\si_0,..., \si_3$ are generators of $\Z_2$ and $g$ is the generator of $\Z$. By \cite[Proposition 8.5]{pis-intro}, this embedding induces a $C^*$-algebra embedding $C^*(\Z_2*\Z_2*\Z)\hookrightarrow C^*(*_4\Z_2)$ and the non-spatial correlation in Remark \ref{zzf} extends to an $M_2$-valued ucp map on $C^*(*_4\Z_2)\ten_{\min} C^*(*_4\Z_2)$ satisfying the following:
\begin{align}&\tilde{\Psi}(\si_2\si_3\ten \si_2\si_3 )=\tilde{\Psi}(\si_1\si_2\si_3\ten \si_1\si_2\si_3)=\left[\begin{array}{cc}\frac{1}{\sqrt{2}}& *\\ \frac{1}{\sqrt{2}}&*
\end{array}\right]  \pl, \label{final1}\\
&\tilde{\Psi}(\si_1\si_2\si_3\ten \si_2\si_3 )=\tilde{\Psi}(\si_2\si_3\ten \si_1\si_2\si_3)= \left[\begin{array}{cc}\frac{1}{\sqrt{2}}& *\\ \frac{-1}{\sqrt{2}}&*
\end{array}\right]  \pl, \label{final2}\\
&\tilde{\Psi}(\si_0\ten \si_0)= \left[\begin{array}{cc}*& *\\ 1&*
\end{array}\right]  \pl.  \label{final4}
\end{align}
Let $\mathcal{P}=\spn \{1\} \cup \{\sigma_i\}_{i=0}^3$; we want to obtain a matrix-valued non-spatial correlation defined only on the operator system $\mathcal{P} \otimes \mathcal{P}$ in $C^*(*_4 \Z_2) \otimes_{\min} C^*(*_4 \Z_2)$, and not on products of elements of $\mathcal{P} \otimes \mathcal{P}$.  To accomplish this, we add extra dimensions to the output space. Let
$\tilde{\Phi}(\cdot)=V^*\pi(\cdot)V$
be a Stinespring dilation, where $\pi$ is a $*$-representation of ${C^*(*_4\Z_2) \ten_{min}C^*(*_4\Z_2)}$ on some Hilbert space $H$ and $V:l_2\to H$ is an isometry. Define unit vectors
\begin{align*}&\ket{h_0}:=V\ket{0}\pl, \pl \ket{h_1}:=V\ket{1},\pl \ket{h_2}:=\pi(\si_3\ten\si_3)\ket{h_0} \pl,
\end{align*}
and the operator \[W:l_2^3\to H\pl , \pl W\ket{j}=\ket{h_j}\pl, \pl 0\le j\le 2 \pl.\]
Then $\Psi(\cdot)=W^*\pi(\cdot)W$ gives an $M_3$-valued, possibly non-unital completely positive map. Let $\ket{\eta_0}=\frac{1}{\sqrt{2}}(\ket{h_0}+\ket{h_1})$ and $\ket{\eta_1}=\frac{1}{\sqrt{2}}(\ket{h_0}-\ket{h_1})$. We note that
$$\pi(\sigma_2 \sigma_3 \otimes \sigma_2 \sigma_3)\ket{h_0}=\pi(\si_1 \si_2\si_3 \otimes \si_1\si_2\si_3)\ket{h_0}=\ket{\eta_0},$$
while
\begin{align}
\pi(\sigma_1 \otimes 1)\ket{\eta_0}=\pi(1\otimes \sigma_1)\ket{\eta_0}=\ket{\eta_1} \text{ and } \pi(\sigma_1 \otimes \sigma_1)\ket{\eta_0}=\ket{\eta_0}. \label{onesidedsi1s}
\end{align}
Considering the actions of $\pi(\sigma_1 \otimes 1)$, $\pi(1 \otimes \sigma_1)$ and $\pi(\sigma_1 \otimes \sigma_1)$, we may write
\begin{align}
& 1=\frac{1}{2} (\bra{h_0}\pi(\sigma_1 \otimes 1)\ket{h_0}-\bra{h_1}\pi(\sigma_1 \otimes 1)\ket{h_1})-i\text{Im}\bra{h_1}\pi(\sigma_1 \otimes 1)\ket{h_0}, \label{leftsidedsi1}\\
& 1=\frac{1}{2} (\bra{h_0}\pi(1 \otimes \si_1)\ket{h_0}-\bra{h_1}\pi(1 \otimes \si_1)\ket{h_1})-i\text{Im}\bra{h_1}\pi(1 \otimes \si_1)\ket{h_0}, \label{rightsidedsi1}\\
& 1=\frac{1}{2} (\bra{h_0}\pi(\sigma_1 \otimes \sigma_1)\ket{h_0}+\bra{h_1} \pi(\sigma_1 \otimes \sigma_1) \ket{h_1})+\text{Re} \bra{h_1} \pi(\sigma_1 \otimes \sigma_1) \ket{h_0}. \label{doublesidedsi1}
\end{align}
In Equations \eqref{leftsidedsi1} and \eqref{rightsidedsi1}, since the first two summands are real, the last summand, being purely imaginary, must be zero. Then the first two summands average to $1$ and each must belong to $[-1,1]$, since $\pi(\sigma_1 \otimes 1)$ and $\pi(1 \otimes \sigma_1)$ are self-adjoint unitaries.  By convexity, this forces
\begin{align}
& 1=\bra{h_0}\pi(\sigma_1 \otimes 1)\ket{h_0}=\bra{h_0}\pi(1 \otimes \sigma_1)\ket{h_0}, \pl \\
& 1=-\bra{h_1}\pi(\sigma_1 \otimes 1)\ket{h_0}=-\bra{h_1}\pi(1 \otimes \sigma_1)\ket{h_0}.
\end{align}
In other words, $\pi(\sigma_1 \otimes 1)\ket{h_0}=\pi(1 \otimes \sigma_1)\ket{h_0}=\ket{h_1}$. Thus, $\pi(\sigma_1 \otimes \sigma_1)\ket{h_j}=\ket{h_j}$ for $j=0,1$. Since $\bra{h_1}{h_0}\ran=0$, the upper-left $2 \times 2$ block of $\Psi$ on each generator must be a contraction. Using the fact that $\Psi(\sigma_i \otimes \sigma_j)$, $\Psi(1 \otimes \sigma_i)$ and $\Psi(\sigma_i \otimes 1)$ are self-adjoint for all $0 \leq i \leq 3$, equations \eqref{final1}--\eqref{doublesidedsi1} show that
\begin{align}
&\Psi(\si_0 \otimes \si_0)=\begin{bmatrix} 0 & 1 & * \\ 1 & 0 & * \\ * & * & * \end{bmatrix}, \, \Psi(\sigma_2 \otimes \sigma_2)=\frac{1}{\sqrt{2}}\begin{bmatrix} * & * & 1 \\ * & * & 1 \\ 1 & 1 & * \end{bmatrix}, \pl \Psi(\si_3 \otimes \si_3)=\begin{bmatrix} * & * & 1 \\ * & * & * \\ 1 & * & * \end{bmatrix} \label{qs42nonspatial1}\\
&\Psi(\si_1 \otimes \si_1)=\begin{bmatrix} 1 & 0 & * \\ 0 & 1 & * \\ * & * & * \end{bmatrix}, \, \Psi(\si_1 \otimes 1)=\begin{bmatrix} 1 & 0 & * \\ 0 & -1 & * \\ * & * & * \end{bmatrix}, \pl \Psi(1 \otimes \si_1)=\begin{bmatrix} 1 & 0 & * \\ 0 & -1 & * \\ * & * & * \end{bmatrix}, \label{qs42nonspatial2}\pl
\end{align}

Then any CP map $\Theta:C^*(*_4\Z_2) \ten_{min}C^*(*_4\Z_2) \to M_3$ satisfying equations \eqref{qs42nonspatial1} and \eqref{qs42nonspatial2} cannot be spatial. Thus, we have shown that $\Psi|_{\mathcal{P}\ten \mathcal{P}}$ is a non-spatial correlation in the non-unital context. We replace $\Psi(\cdot)$ by a UCP map $\Psi'$ given as a point-weak$^*$-cluster point of the net of CP maps $(\Psi(1)+\e)^{-\frac{1}{2}} \Psi(\cdot) (\Psi(1)+\e)^{-\frac{1}{2}}$ for $\e>0$. Then the restriction $\Psi'|_{\mathcal{P}\ten \mathcal{P}}$ gives a $M_3$-valued ucp map, which is not spatial since $\Psi$ is not spatial.  Hence, we obtain a correlation in $\Q^3(4,2)$ which is not in $\Q_s^3(4,2)$.

For iv), we consider the following group embedding  of $\Z_2*\Z_2*\Z$ into $*_3\Z_2$:
\[\si_0\mapsto \si_0\pl, \pl \si_1\mapsto \si_1\pl , \pl g\mapsto \si_2\si_0\si_1\si_2 \pl.\]
Denote by $\omega$ the word $\si_2\si_0\si_1\si_2$. Then the non-spatial correlation in Remark \ref{zzf} extends to $C^*(*_3\Z_2)\ten C^*(*_3\Z_2)$ as follows: \begin{align}&\tilde{\Phi}(\omega\ten \omega)=\tilde{\Phi}(\si_1\omega\ten \si_1\omega)=\left[\begin{array}{cc}\frac{1}{\sqrt{2}}& *\\ \frac{1}{\sqrt{2}}&*
\end{array}\right]  \pl, \\
&\tilde{\Phi}(\si_1\omega\ten \omega )=\tilde{\Phi}(\omega\ten \si_1\omega)= \left[\begin{array}{cc}\frac{1}{\sqrt{2}}& *\\ \frac{-1}{\sqrt{2}}&*
\end{array}\right]  \pl, \\
&\tilde{\Phi}(\si_0\ten \si_0)= \left[\begin{array}{cc}*& *\\ 1&*
\end{array}\right]  \pl,
\end{align}
Let $\tilde{\Phi}(\cdot)=V_0^*\pi_0(\cdot)V_0$ be a Stinespring dilation of $\tilde{\Phi}$, and let $\ket{k_j}=V_0\ket{j}$ for $0\le j\le 1$. In order to express the above non-spatial correlation on the generators, we define three intermediate vectors
\begin{align*}&\ket{k_2}:=\pi_0(\si_2\ten \si_2)\ket{k_0}\pl, \ket{k_3}:=\pi_0(\si_1\ten \si_1)\ket{k_2}\pl, \\&\ket{k_4}:=\pi_0(\si_0\ten \si_0)\ket{k_3}\pl. \end{align*}
Then note that $\pi_0(\sigma_2 \otimes \sigma_2)\ket{k_4}=\pi_0(\omega \otimes \omega)\ket{k_0}=\frac{1}{\sqrt{2}}(\ket{k_0}+\ket{k_1})$. Thus, to describe $\tilde{\Phi}$ in terms of a matrix-valued correlation defined only on the generators, it suffices to use the vectors $\ket{k_j}$ for $0 \leq j \leq 4$.  In a similar manner to the argument for iii), we obtain a $M_5$-valued non-spatial correlation in $Q^5(3,2)$.

The same argument leads to a matrix-valued non-spatial correlation for $\Z_3*\Z_3$. Indeed, let $a,b$ be the generators of the two copies of $\Z_3$. We use the embedding $\F_3\to\Z_3*\Z_3$ given by
\[\si_0\mapsto aba, \pl \si_1\mapsto bab, \pl g \mapsto ab^{2}a^{2}b \pl.\]
It is not hard to see that these three words are free. Then the non-spatial correlation from Remark \ref{zzf} extends to a ucp map $\Gamma$ on $C^*(\Z_3*\Z_3) \otimes_{\min} C^*(\Z_3 * \Z_3)$ via
\begin{align}
&\Gamma(ab^2a^2b \otimes ab^2a^2b)=\Gamma(bab \cdot ab^2a^2b \otimes bab \cdot ab^2a^2b)=\begin{bmatrix} \frac{1}{\sqrt{2}} & * \\ \frac{1}{\sqrt{2}} & * \end{bmatrix} \pl, \\
&\Gamma(bab \cdot ab^2a^2b \otimes ab^2a^2b)=\Gamma(ab^2a^2b \otimes bab \cdot ab^2a^2b)=\begin{bmatrix} \frac{1}{\sqrt{2}} & * \\ -\frac{1}{\sqrt{2}} & * \end{bmatrix} \pl, \\
& \Gamma(aba \otimes aba)=\begin{bmatrix} * & * \\ 1 & * \end{bmatrix}.
\end{align}
We want a matrix-valued, non-spatial ucp map specified on $\mathcal{R} \otimes \mathcal{R}$, where $$\mathcal{R}=\spn \{1,a,b,a^*,b^*\}=\spn \{1,a,b,a^2,b^2\} \subseteq C^*(\Z_3 * \Z_3).$$
Let $\Gamma(\cdot)=V_1^* \pi_1 V_1$ be a Stinespring dilation for $\Gamma$ and $\ket{\zeta_j}=V_1 \ket{j}$ for $j=0,1$.  We define eleven intermediate vectors as follows:
\begin{align}
&\ket{\zeta_2}=\pi_1(a \otimes a)\ket{\zeta_0}, \, \label{gammafirstintermediatevectors} \ket{\zeta_3}=\pi_1(b \otimes b)\ket{\zeta_2}, \\
&\ket{\zeta_4}=\pi_1(b \otimes b)\ket{\zeta_0}, \, \ket{\zeta_5}=\pi_1(a^2 \otimes a^2)\ket{\zeta_4}, \, \ket{\zeta_6}=\pi_1(b^2 \otimes b^2)\ket{\zeta_5}. \pl
\end{align}
Let $\ket{\theta_0}=\frac{1}{\sqrt{2}}(\ket{\zeta_0}+\ket{\zeta_1})$ and $\ket{\theta_1}=\frac{1}{\sqrt{2}}(\ket{\zeta_0}-\ket{\zeta_1})$. It follows that
\begin{align}
\pi_1(a \otimes a)\ket{\zeta_6}=\pi_1(ab^2a^2b \otimes ab^2a^2b)\ket{\zeta_0}=\ket{\theta_0} \label{gammaong}
\end{align}
Define
\begin{align}
&\ket{\zeta_7}=\pi_1(1 \otimes b)\ket{\theta_0}, \, \ket{\zeta_8}=\pi_1(1 \otimes a)\ket{\zeta_7}, \pl \\
&\ket{\zeta_9}=\pi_1(b \otimes 1)\ket{\theta_0}, \, \ket{\zeta_{10}}=\pi_1(a \otimes 1)\ket{\zeta_9}, \\
&\ket{\zeta_{11}}=\pi_1(b \otimes b)\ket{\theta_0}, \, \ket{\zeta_{12}}=\pi_1(a \otimes a)\ket{\zeta_{11}}.
\end{align}
The definition of $\Gamma$ can be summarized in terms of the vectors via
\begin{align}
& \pi_1(a \otimes a)\ket{\zeta_6}=\pi_1(a \otimes a)\ket{\zeta_{11}}=\ket{\theta_0}, \\
& \pi_1(b \otimes 1)\ket{\theta_0}=\pi_1(1 \otimes b)\ket{\theta_0}=\ket{\theta_1}, \\
& \pi_1(a \otimes a)\ket{\zeta_3}=\ket{\zeta_1}. \label{gammalastintermediatevectors}
\end{align}
By the same argument as for iii) and iv), equations \eqref{gammafirstintermediatevectors} through \eqref{gammalastintermediatevectors} show that $Q_s^{13}(2,3)$ is not closed, which completes the proof.
\end{proof}
Note that for $(d,m)=(2,2)$, $\Z_2*\Z_2$ is an amenable group isomorphic to the semi-direct product $\Z\rtimes \Z_2$, and its irreducible representations are at most $2$-dimensional. Thus, our result is optimal in the sense that for $(d,m)=(2,2)$, $\Q_f^n(2,2)=\Q_s^n(2,2)=\Q^n(2,2)=\Q_c^n(2,2)$ for all $n$. We have shown that for all non-trivial sizes $(d,m) \neq (2,2)$, the spatial quantum correlation set in $d$ inputs and $m$ outputs is not closed at some matrix level. The question of whether scalar-valued quantum correlation sets are closed for sizes smaller than $(5,2)$ remains open.

\begin{rem}{\rm Let $G_1$ and $G_2$ be two finite groups. It is known that the product $G_1*G_2$ contains a copy of $\F_2$ if and only if $|G_1|+|G_2|\ge 5$ and $|G_1|,|G_2|>1$ (see, for example, \cite[p.~8]{serre}). Using this kind of group embedding, we know that there exists a $n$ such that the $M_n$-valued spatial correlation set for $C^*(\Z_2*\Z_3)$ is not closed (or for any $C^*(G_1*G_2)$).
}
\end{rem}
The main idea we used in the proof of Theorem \ref{maintheorem} is to reduce the length of the words in the definition of the UCP map by adding intermediate vectors. Conversely, we can reduce the matrix size for the UCP map by allowing correlations that use words with length greater than $1$. Such quantum correlations on words are called \emph{spatiotemporal correlations} in \cite{Fritz}.
In the context of spatiotemporal correlations, we have the following observation.
\begin{cor} Let $g_0,g_1,g_2$ be the canonical generators of $C^*(\F_3)$. There exists a state $\psi$ on ${C^*(\F_3)\ten_{min}C^*(\F_3)}$ satisfying
\begin{align}
 \psi(g_0\ten g_0)=0\pl,
 \pl\psi(g_j\ten g_k)=\frac{1}{\sqrt{2}}\pl,
\pl\psi(g_0g_j\ten g_0g_k)=\frac{(-1)^{j-k}}{\sqrt{2}}\pl, \pl 1\le j,k\le 2\pl.  \label{f3}
\end{align}
Moreover, any state satisfying \eqref{f3} is not spatial.
\end{cor}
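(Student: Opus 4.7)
The plan is to read off the state $\psi$ from the matrix-valued non-spatial UCP map $\Psi: C^*(\F_3)\ten_{min}C^*(\F_3)\to M_2$ produced in Lemma \ref{fact} iii), and then to preclude any spatial realization by reducing it to the forbidden configuration of Lemma \ref{fact} ii).

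For existence, I would take a Stinespring dilation $\Psi(\cdot)=V^*\pi(\cdot)V$ on some Hilbert space and set $\ket{h_0}:=V\ket{0}$, $\ket{h_1}:=V\ket{1}$; these are orthonormal since $V$ is an isometry. The $(1,0)$-entry of $\Psi(g_0\ten g_0)$ equals $1$, so $\bra{h_1}\pi(g_0\ten g_0)\ket{h_0}=1$; since both $\ket{h_1}$ and $\pi(g_0\ten g_0)\ket{h_0}$ are unit vectors, the Cauchy--Schwarz equality case forces $\pi(g_0\ten g_0)\ket{h_0}=\ket{h_1}$. Defining $\psi(\cdot):=\bra{h_0}\pi(\cdot)\ket{h_0}$, the three relations of \eqref{f3} then drop out: $\psi(g_0\ten g_0)=\bra{h_0}{h_1}\ran=0$ by orthonormality; $\psi(g_j\ten g_k)=\frac{1}{\sqrt{2}}$ is the $(0,0)$-entry of $\Psi(g_j\ten g_k)$; and $\psi(g_0 g_j\ten g_0 g_k)=\bra{h_0}\pi(g_0\ten g_0)\pi(g_j\ten g_k)\ket{h_0}=\bra{h_1}\pi(g_j\ten g_k)\ket{h_0}=\frac{(-1)^{j-k}}{\sqrt{2}}$ is the $(1,0)$-entry.

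For the ``moreover'' claim, suppose toward a contradiction that some state $\psi$ satisfying \eqref{f3} is spatial, i.e.\ $\psi(\cdot)=\bra{\xi}(\pi_1\ten\pi_2)(\cdot)\ket{\xi}$ for $*$-representations $\pi_1,\pi_2$ on Hilbert spaces $H_A,H_B$ and a unit vector $\ket{\xi}\in H_A\ten H_B$. Write $u_j=\pi_1(g_j)$, $v_k=\pi_2(g_k)$ and set $\ket{\tilde h_0}:=\ket{\xi}$, $\ket{\tilde h_1}:=(u_0^*\ten v_0^*)\ket{\xi}$. Both are unit vectors, and $\bra{\tilde h_1}{\tilde h_0}\ran=\psi(g_0\ten g_0)=0$, so they are orthonormal. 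The relation $\psi(g_j\ten g_k)=\frac{1}{\sqrt{2}}$ translates to $\bra{\tilde h_0}(u_j\ten v_k)\ket{\tilde h_0}=\frac{1}{\sqrt{2}}$, while $\psi(g_0 g_j\ten g_0 g_k)=\frac{(-1)^{j-k}}{\sqrt{2}}$ rewrites as $\bra{\tilde h_1}(u_j\ten v_k)\ket{\tilde h_0}=\bra{\xi}(u_0 u_j\ten v_0 v_k)\ket{\xi}=\frac{(-1)^{j-k}}{\sqrt{2}}$ for $1\le j,k\le 2$. Finally, $(u_0^*\ten v_0^*)\ket{\tilde h_0}=\ket{\tilde h_1}$ by construction. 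After the harmless relabeling $\{1,2\}\to\{0,1\}$ and, if needed, embedding $H_A$ and $H_B$ into a common Hilbert space, the tuple $(u_1,u_2,u_0^*,v_1,v_2,v_0^*,\ket{\tilde h_0},\ket{\tilde h_1})$ meets the hypotheses of Lemma \ref{fact} ii), yielding the required contradiction.

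The main subtlety is purely one of index bookkeeping: correctly identifying $u_0^{*}$ and $v_0^{*}$ with the ``third'' unitaries $u_2,v_2$ implementing \eqref{e2} in Lemma \ref{fact} ii). Once that identification is in place the construction is mechanical, and the corollary amounts to repackaging the $M_2$-valued obstruction of Lemma \ref{fact} as a scalar-valued obstruction at the cost of allowing words of length $2$ in the generators.
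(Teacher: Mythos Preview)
Your approach matches the paper's: read $\psi$ off as the $(0,0)$ entry of the UCP map $\Psi$ from Lemma~\ref{fact}(iii), and for non-spatiality set up a second vector using $g_0$ and invoke Lemma~\ref{fact}(ii). Your ``moreover'' half is in fact tidier than the paper's, since defining $\ket{\tilde h_1}=(u_0^*\otimes v_0^*)\ket{\xi}$ makes $\bra{\tilde h_1}(u_j\otimes v_k)\ket{\tilde h_0}=\psi(g_0g_j\otimes g_0g_k)$ line up directly with the hypothesis.

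There is, however, a genuine slip in your existence argument. From $\pi(g_0\otimes g_0)\ket{h_0}=\ket{h_1}$ you conclude $\bra{h_0}\pi(g_0\otimes g_0)=\bra{h_1}$, but taking adjoints only gives $\bra{h_0}\pi(g_0^*\otimes g_0^*)=\bra{h_1}$. These coincide only when $\pi(g_0\otimes g_0)$ is self-adjoint, which does not follow from the entries of $\Psi$ listed in \eqref{free3} alone (the $(0,1)$ entry of $\Psi(g_0\otimes g_0)$ is left as a $*$). Without this, the chain
\[
\psi(g_0g_j\otimes g_0g_k)=\bra{h_0}\pi(g_0\otimes g_0)\pi(g_j\otimes g_k)\ket{h_0}=\bra{h_1}\pi(g_j\otimes g_k)\ket{h_0}
\]
breaks at the second equality. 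The repair is immediate once you use the \emph{specific} $\Psi$ built in Lemma~\ref{fact}(iii)/Remark~\ref{app}: there the approximating representations send $g_0$ to the self-adjoint unitary $Z\otimes 1$, so $\Psi(g_0^*\,\cdot\,\otimes g_0^*\,\cdot)=\Psi(g_0\,\cdot\,\otimes g_0\,\cdot)$; equivalently, $\Psi$ factors through the quotient in which $g_0=g_0^*$, and one may choose the Stinespring dilation $\pi$ there so that $\pi(g_0\otimes g_0)$ is self-adjoint. With that one sentence added, your computation goes through and the rest of your proof is correct.
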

\begin{proof}
Let $\Psi: C^*(\F_3)\ten_{min} C^*(\F_3)\to M_2$ be the UCP map from Lemma \eqref{fact} satisfying
\begin{align}\label{3g}
\Psi(g_{0}\ten g_{0})=\left[\begin{array}{cc}*& *\\ 1 & *
\end{array}\right]\pl, \pl\Psi(g_{j}\ten g_{k})=\left[\begin{array}{cc}\frac{1}{\sqrt{2}}& *\\ \frac{(-1)^{j-k}}{\sqrt{2}}&*
\end{array}\right]\pl,\pl 1\le j,k\le 2 \pl.
\end{align}
Because $\Psi$ is UCP and $g_0 \otimes g_0$ is a contraction, we know that $\Psi(g_{0}\ten g_{0})=\left[\begin{array}{cc}0& *\\ 1 & 0
\end{array}\right]$. Then it is easy to see that $\psi$ is given by the first diagonal entry of $\Psi$. Conversely, suppose such a state $\psi$ is spatially implemented by
\[\psi(\cdot)=\bra{h_0}(\pi_1\ten \pi_2)(\cdot)\ket{h_0}\pl.\]
Let $\ket{h_1}:=\pi_1(g_0)\ten \pi_2(g_0)\ket{h_0}$. Then $\ket{h_1}$ is orthogonal to $\ket{h_0}$ because \[\bra{h_0}h_1\ran=\bra{h_0}\pi_1(g_0)\ten \pi_2(g_0)\ket{h_0}=\psi(g_0\ten g_0)=0\pl.\] Then the vectors $\ket{h_0}$ and $\ket{h_1}$, together with the unitaries $\pi_1(g_j),\pi_2(g_j), 0\le j\le 2$ give exactly the scenario as in Lemma \ref{fact} ii), which is a contradiction.
\end{proof}
The point of the above corollary is that the non-spatial nature of the UCP map can be witnessed on words with length at most $2$, which span a finite dimensional subspace. Although the set of spatial states always forms a weak$^*$-dense subset of the state space of the minimal tensor product of two $C^*$-algebras, in general non-spatial states widely exist for the minimal tensor product. The following remark is due to Paulsen. Let $C[0,1]$ be the continuous functions on the unit interval. Then the minimal tensor $C[0,1]\ten_{min}C[0,1]\cong C([0,1]^2)$ is the continuous function on the square. A spatial state corresponds to a probability measure on $[0,1]^2$ that can be written as a (possibly infinite) convex combination of product probability measures. This is obviously not the case for the bilinear form
\[(f,g)\longrightarrow \int_{[0,1]}f(x)g(x)dx,  \pl\]
where $dx$ is the Lebesgue measure. In general, spatial states of $\A\ten \B$ correspond to nuclear operators from $\A$ to $\B^*$. From the above example, non-spatial states exist for $\A\ten \B$ whenever both $\A$ and $\B$ contain operators with continuous spectra, because non-spatial states are extendable to larger algebras. Nevertheless, non-spatial states witnessed on finite dimensional subsystem can only exist for noncommutative $\A$ and $\B$.

\begin{prop}Let $\A,\B$ be two $C^*$-algebras and $\A$ be commutative. Let $E\subset \A$ and $F\subset \B$ be two finite dimensional operator systems. For any state $\phi$ of $\A\ten_{\min} \B$, there exists a spatial state $\psi$ of $\A\ten_{\min} \B$ such that
$\tilde{\phi}|_{E\ten F}=\phi|_{E\ten F}$.
\end{prop}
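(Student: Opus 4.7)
The plan is to use commutativity of $\A$ to decompose $\phi$ as an integral of product states, then apply Carath\'eodory's theorem in the finite-dimensional space $(E\ten F)^*$ to realize $\phi|_{E\ten F}$ as a finite convex combination of product functionals, which is then easily seen to extend to a genuine spatial state on $\A\ten_{\min}\B$.

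First, I would write $\A\cong C(X)$ for a compact Hausdorff space $X$ via Gelfand and note that the commutative $C^*$-algebra $\A$ is nuclear, so $\A\ten_{\min}\B=\A\ten_{\max}\B$. Using the spectral decomposition of $\pi_\phi(\A)$ inside the GNS Hilbert space of $\phi$, one obtains a disintegration
\[ \phi(f\ten b)=\int_X f(x)\,\omega_x(b)\,d\mu(x),\pl f\in\A,\pl b\in\B, \]
where $\mu$ is the probability measure on $X$ representing $\phi|_{\A\ten 1}$ and $x\mapsto\omega_x$ is a weak-$*$ measurable field of states on $\B$.

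Next, let $S(\B)$ denote the (weak-$*$ compact) state space of $\B$ and consider the continuous map $\Phi:X\times S(\B)\to (E\ten F)^*$ given by $\Phi(x,\tau)(f\ten b)=f(x)\tau(b)$. Its image $P$ is a compact subset of the finite-dimensional space $(E\ten F)^*$. Writing $\phi|_{E\ten F}=\int_X\Phi(x,\omega_x)|_{E\ten F}\,d\mu(x)$ exhibits $\phi|_{E\ten F}$ as a barycenter of a probability measure on $P$, so it lies in $\operatorname{conv}(P)$. By Carath\'eodory's theorem, there exist finitely many points $(x_i,\tau_i)_{i=1}^N$ and nonnegative weights $\lambda_i$ with $\sum_i\lambda_i=1$ such that
\[ \phi|_{E\ten F}=\sum_{i=1}^N \lambda_i\,\Phi(x_i,\tau_i)|_{E\ten F}. \]

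Finally, define $\psi(f\ten b)=\sum_{i=1}^N\lambda_i f(x_i)\tau_i(b)$ and realize it as a spatial state by taking $H_A=\C^N$ with $\pi_A(f)=\diag(f(x_1),\ldots,f(x_N))$, forming $H_B=\bigoplus_i H_{\tau_i}$ with $\pi_B=\bigoplus_i\pi_{\tau_i}$ using the GNS triples $(\pi_{\tau_i},H_{\tau_i},\xi_{\tau_i})$, and setting $\xi=\sum_{i=1}^N\sqrt{\lambda_i}\,e_i\ten\xi_{\tau_i}\in H_A\ten H_B$. Then $\psi(\cdot)=\langle\xi,(\pi_A\ten\pi_B)(\cdot)\xi\rangle$ is a spatial state on $\A\ten_{\min}\B$ that agrees with $\phi$ on $E\ten F$ by construction.

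The main obstacle is the disintegration step: arranging that $\{\omega_x\}$ is genuinely a weak-$*$ measurable field of states so that the integral formula holds on all of $\A\ten\B$. A cleaner alternative is to skip disintegration and argue directly that the image of the spatial states in $(E\ten F)^*$ is closed by combining the Bunce--Salinas density of spatial states with the observation that this image equals $\operatorname{conv}(P)$ via the same Carath\'eodory argument applied to product states. Either route crucially exploits the commutativity of $\A$, matching the preceding remark that non-spatial states witnessed on a finite-dimensional operator subsystem can only exist when both factors are noncommutative.
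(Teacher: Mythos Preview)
Your argument is correct and takes a genuinely different route from the paper's. The paper identifies $\phi|_{E\ten F}$ with a linear map $T_\phi:E\to F^*$, observes that commutativity of $\A$ forces $E\ten_{\min}F=E\ten_\epsilon F$ (the Banach space injective tensor product), and then uses that in finite dimensions the integral norm coincides with the nuclear norm to produce a finite decomposition $\phi|_{E\ten F}=\sum_j\lambda_j\,x_j^*\ten y_j^*$ with $\|x_j^*\|=\|y_j^*\|=1$; the $x_j^*,y_j^*$ are then extended to states on $\A$ and $\B$. Your approach instead disintegrates $\phi$ over $X$ into product states $\delta_x\ten\omega_x$ and applies Carath\'eodory in the finite-dimensional space $(E\ten F)^*$.

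Each approach has a soft spot. In the paper's argument the assertion ``$x_j^*$ and $y_j^*$ are positive because $\phi$ is'' is not automatic for an arbitrary nuclear decomposition and tacitly requires choosing the decomposition with extra care. Your approach gets positivity for free (the $\omega_x$ are states by construction), but pays for it with the disintegration step, which in full generality (non-separable $\B$, arbitrary compact $X$) needs justification; reducing first to the separable $C^*$-subalgebras generated by $E$ and $F$ handles this cleanly, and you could mention that. Your alternative via Bunce--Salinas density is a nice observation, though making it self-contained still requires knowing that restrictions of spatial states land in $\operatorname{conv}(P)$, which again comes down to diagonalizing the commutative side. Overall your route is more elementary in that it avoids the integral/nuclear norm machinery and makes the convex geometry explicit, while the paper's route is terser once one grants the positivity of the nuclear summands.
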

\begin{proof}
The restriction $\phi|_{E\ten F}$ corresponds to a map $T_\phi:E\to F^*$
\[T_\phi(x)(y)=\phi(x\ten y) \pl, \forall  \pl x\in E, y\in F\pl.\]
Because $\A$ is commutative, $E\ten_{\min} F$ is isomorphic to the Banach space injective tensor product $E\ten_{\epsilon} F$ (see \cite{pis-intro}). Thus the integral norm of $T_\phi$ equals to $1$ and coincides with the nuclear norm since $E$ is finite dimensional (see \cite{pisier86} for definition of integral norm and nuclear norm). Then there exists some finite sequences $x_j^*\in E^*, y_j^*\in F^*$ and $\sum_j\la_j=1, \la_j>0$ such that
\[ \phi|_{E\ten F}=\sum_{j}\la_j x_j^*\ten y_j^* \pl. \]
and $\norm{x_j^*}{E^*}=\norm{y_j^*}{F^*}=1$ for any $j$. It is clear that $x_j^*$ and $y_j^*$ are positive because $\phi$ is. Let $\tilde{x}^*_j$ (resp. $\tilde{y}^*_j$) be the state of $\A$ (resp. $\B$) as an extension of $x_j^*$. Thus we have that
\[ \psi=\sum_{j}\la_j \tilde{x}^*_j\ten \tilde{y}^*_j \pl. \]
is a spatial state of $\A\ten_{min}\B$ which coincides with $\phi$ on $E\ten F$.
\end{proof}

{\noindent \bf Acknowledgements.}---We thank Vern I. Paulsen for explaining to us the idea of embezzlement of entanglement and for his remark on non-spatial states. We thank Narutaka Ozawa for comments on our old formulation of Theorem B. We also thank William Solfstra and Volker Scholz for helpful conversations. We thank Texas A$\&$M University for their kind hospitality during the conference ``Probabilistic and Algebraic Methods in Quantum information theory''. LG acknowledges support from NSF grant DMS-1700168, Illinois University Fellowship and Trjitzinsky Fellowship. MJ is partially supported by NSF
grant DMS-1501103. LG and MJ are grateful
to Institut Henri Poincar\'e for hospitality during participation to the trimester ``Analysis in quantum information''. 

\end{document}